\newcommand\blue[1]{\textcolor{blue}{#1}}
\newcommand\red[1]{\textcolor{red}{#1}}
\numberwithin{equation}{section}
\newcommand{\Hm}[1]{\leavevmode{\marginpar{\tiny%
$\hbox to 0mm{\hspace*{-0.5mm}$\leftarrow$\hss}%
\vcenter{\vrule depth 0.1mm height 0.1mm width \the\marginparwidth}%
\hbox to 0mm{\hss$\rightarrow$\hspace*{-0.5mm}}$\\\relax\raggedright
#1}}}
\newtheorem{theo}{Theorem}[section]
\newtheorem{lem}[theo]{Lemma}
\newtheorem{prop}[theo]{Proposition}
\newtheorem{corol}[theo]{Corollary}
\def\J{{\mathcal{J}\ \!\!}}
\def\R{{\mathbb{R}}}
\def\L{{\mathcal{L}\ \!\!}} 
\def\B{{\mathcal{B}\ \!\!}}
\def\S{{\mathcal{S}\ \!\!}}
\def\C{{\mathcal{C}\ \!\!}}
\def\M{{\mathcal{M}\ \!\!}}
\def\Var{{\mathrm{{\rm Var}}}}
\def\vol{{\mathrm{{\rm vol}}}}
\def\J{{\mathrm{{\rm J}}}}
\def\diam{{\mathrm{{\rm diam}}}}
\def\Int{{\mathrm{{\rm int}}}}
\def\diag{{\mathrm{{\rm diag}}}}
\def\and{{\mathrm{{\rm and}}}}
\def\Jac{{\mathrm{{\rm Jac}}}}
\def\O{{\Omega\ \!\!}}
\def\dO{{\partial \Omega\ \!\!}}
\def\ve{{\varepsilon\ \!\!}}
\def\tmax{{\textrm{max}}}
\def\tmin{{\textrm{min}}}
\def\beq{\begin{equation}}
\def\eeq{\end{equation}}
\begin{document}

\title[Spectral gap on convex bodies]{A note on the spectral gap for log-concave probability measures on convex bodies}

\author{Michel~Bonnefont} \address[M.~Bonnefont]{UMR CNRS 5251, Institut de Math\'ematiques de Bordeaux, Universit\'e Bordeaux , France} \thanks{MB is partially supported by the ANR-23-CE40-0003 Conviviality project and the ANR-19-CE40-0010 QuAMProcs project of the French National Research Agency} \email{\url{mailto:michel.bonnefont(at)math.u-bordeaux.fr}} \urladdr{\url{http://www.math.u-bordeaux.fr/~mibonnef/}}

\author{Ald\'eric~Joulin} \address[A.~Joulin]{UMR CNRS 5219, Institut de Math\'ematiques de Toulouse, Universit\'e de Toulouse, France} \thanks{AJ is partially supported by the ANR-23-CE40-0003 Conviviality project of the French National Research Agency and the ANR LabEx CIMI (grant ANR-11-LABX-0040) within the French  State
Programme ``Investissements d'Avenir".} \email{\url{mailto:ajoulin(at)insa-toulouse.fr}} \urladdr{\url{http://perso.math.univ-toulouse.fr/joulin/}}

\keywords{Spectral gap; Neumann eigenvalues; Log-concave probability measure, Convex body}

\subjclass[2010]{60J60, 39B62, 47D07, 37A30, 58J50.}

\maketitle

\begin{abstract} 
In this paper, we provide explicit lower bounds with respect to some quantities of interest (parameters of the underlying distribution, dimension, geometrical characteristics of the domain, position of the origin, etc.) on the spectral gap of log-concave probability measures on convex bodies. Our results are illustrated by some classical and less classical examples.
\end{abstract}

\section{Introduction}

On a (connected) compact set $\Omega \subset \R^d$ ($d\geq 2$) with smooth boundary $\partial \Omega$ and outer unit-normal $\eta$, we consider a probability measure $\mu$ whose Lebesgue density is proportional to $e^{-V}$, where $V :\Omega \to \R$ is some smooth potential on $\Omega$. One can associate a canonical weighted Laplacian operator $L = \Delta - \langle \nabla V , \nabla \rangle$ endowed with Neumann conditions at the boundary. Under some reasonable assumptions on $V$, it is well-known that the underlying Markov process reflected at the boundary converges in distribution to the invariant and reversible probability measure $\mu$ and the speed of convergence in $L^2 (\mu)$ is governed by the so-called spectral gap $\lambda_1(\Omega, \mu) $ of the operator $-L$, that is, its first positive eigenvalue. In theory it is quite hard to find explicitly the spectral gap beyond product spaces, a situation for which the problem is reduced to the one-dimensional case. Even in this 1D setting, only few examples of explicit constants are known, cf. \cite{barthe_roustant} by means of the Sturm-Liouville theory. To our knowledge, for instance for the uniform distribution in higher dimension, the spectral gap is known explicitly only on Euclidean balls \cite{weinberger} or on some specific triangles \cite{nazarov}. Hence, providing (lower) bounds on the spectral gap that depend conveniently on the dimension is a challenging question that attracted a lot of attention in the last decades, culminating in the famous KLS conjecture. Introduced initially in an isoperimetric context by Kannan, Lov\'asz and Simonovits \cite{kls}, it states equivalently that the spectral gap of the operator $-L$ associated to a convex potential $V$ is of order the inverse of the operator norm of the covariance matrix of $\mu$, cf. for instance \cite{KLS_livre} for a nice introduction to the topic. Actually, the conjecture is almost solved in the sense that, after a series of improvements by several authors using Eldan's localization method, the best and last result is Klartag's one \cite{klartag_KLS} which confirms the conjecture up to some logarithmic prefactor of the dimension. \smallskip

In spirit, the present work differs a bit from this research around the KLS conjecture. Indeed our main motivation is to offer fully explicit theoretical guarantees on the spectral gap which may be useful for practitioners, that is, involving explicit bounds with res-pect to some parameters of interest (parameters of the laws, dimension, geometrical characteristics of $\Omega$, position of the origin, etc.) and not only estimates available up to universal constants. This may be used for example in the Global Sensitivity Analysis of numerical model outputs, a topic which is by now very popular in Statistics and engineering. In these models the most important input variables, which follow some standard distributions but truncated on compact domains, may be determined through some $L^2$ sensitivity indices relating the variance and the energy of the costly computer code function, emphasizing the role of Poincar\'e type inequalities $-$ and thus the spectral gap $-$ in the analysis. We refer to \cite{barthe_roustant} for this approach with one-dimensional input independent variables, together with historical references and credit. 

\smallskip

In order to give an idea of the results we are able to obtain, let us already state one of our main contribution of the paper. Below $\L$ stands for the diagonal matrix operator acting on smooth vector fields $F$ as $\L F = (L F_i)_{i=1,\ldots, d}$, $\J$ stands for the Jacobian matrix and $\rho(A)$ denotes the smallest eigenvalue of a given symmetric matrix $A$. We refer to the next sections for other missing definitions.
\begin{theo}
\label{theo:main}
On a (connected) compact set $\Omega \subset \R^d$ ($d\geq 2$) with smooth boundary $\partial \Omega$ and outer unit-normal $\eta$, we consider a probability measure $\mu$ whose Lebesgue density is proportional to $e^{-V}$, where $V :\Omega \to \R$ is some sufficiently smooth potential on $\Omega$. Let $W$ be some smooth invertible diagonal matrix mapping satisfying the two following assumptions:
\begin{enumerate}
\item[$(A_1)$] The symmetric matrix mapping $\nabla^2 V - \L W \, W^{-1}$ is positive definite on $\Omega$.
\item[$(A_2)$] At the boundary $\dO$ the symmetric matrix mapping $\J \eta - W \, \langle \nabla W^{-1} , \eta \rangle $ (acting as a quadratic form on the tangent space) is non-negative.
\end{enumerate}
Then the generalized Brascamp-Lieb inequality holds: for all $ g \in \C ^\infty (\Omega)$,
$$
\Var _\mu (g) \leq \int_\Omega \left \langle \nabla g , \, (\nabla^2 V - \L W \, W^{-1})^{-1} \nabla g \right \rangle \, d\mu .
$$
In particular the spectral gap $\lambda_1 (\O,\mu)$ satisfies
$$
\lambda_1 (\O,\mu) \geq \inf_{x\in \O} \rho \left( \nabla^2 V (x) - \L W (x) \, W^{-1} (x) \right).
$$
\end{theo}

The proof, delayed in Section \ref{sect:proof_thm}, is based on the intertwining approach we introduced and studied in \cite{ABJ} to obtain Brascamp-Lieb type inequalities on the whole Euclidean space, in the spirit of the famous Brascamp-Lieb inequality established in \cite{brascamp_lieb}. As such, the present study generalizes these results to measures restricted to domains, for which boundary terms have to be considered. Theoretically, our result covers many different settings as soon as we are able to find some $W$ satisfying the announced assumptions. In particular we will see that our approach is relevant when dealing with log-concave probability measures $\mu$ on a convex body $\O$. On the one hand, it includes the uniform distribution on $\O$ for which our estimates improve two completely explicit results under some convenient assumptions: Payne-Weinberger's one \cite{payne_weinberger} involving the diameter of $\O$ and also Klartag's result \cite{klartag} for unconditional convex bodies relying on a kind of monotonicity property. On the other hand, in the presence of a convex potential $V$, we reinforce the Brascamp-Lieb inequality and improve the usual bound on the spectral gap provided by the Bakry-Emery criterion when $V$ is uniformly convex on $\O$. Such an analysis will be illustrated in the case of the Subbotin distribution, which is a radial log-concave probability measure whose potential is not uniformly convex. 
As a final remark, we mention that the applications of Theorem \ref{theo:main} only address the convex setting (convex domains and convex potentials) but we are fully convinced that the method developed here may be applied to some specific non convex situations. This will be the matter of a future work. \smallskip

Let us briefly describe the content of the paper. In Section \ref{sect:material} we recall some basic material and establish preliminary results about Brascamp-Lieb type inequalities and spectral estimates, contained in Lemmas \ref{lemme:K} and \ref{lemme:decompo}, which are at the basis of the proof of Theorem \ref{theo:main} given thereafter. Section \ref{sect:body} is then devoted to apply Theorem \ref{theo:main} when $\Omega$ is a convex body, covering various interesting situations for log-concave distributions (uniform, radial, etc.). It leads to Corollary  \ref{corol:main2bis} (and its consequences) and to Corollary  \ref{corol:main3} in the case of generalized Orlicz balls. Finally in the Appendix, we provide some additional elements on the spectral gap of the uniform distribution on Euclidean balls and discuss in this context a possible optimality of Theorem \ref{theo:main}.

\section{Preliminary results and proof of Theorem \ref{theo:main}}
\label{sect:material}
\subsection{Basic material and notation}

In this paper, we consider on the Euclidean space $(\R^d , \vert \, \cdot \, \vert )$ of dimension $d\geq 2$ a (connected) compact set $\Omega$ with sufficiently smooth boundary (say $\C^2$) $\partial \Omega$ and outer unit-normal $\eta$. Let $\C^\infty (\Omega)$ be the space of infinitely differentiable real-valued functions on $\Omega$.
We introduce a probability measure $\mu$ on $\Omega$ whose Lebesgue density is proportional to $e^{-V}$, where $V : \Omega \to \R$ is some sufficiently smooth potential on $\Omega$, and consider on $\C^\infty (\Omega) $ the associated second-order differential operator
$$
L f = \Delta f - \langle \nabla V , \nabla f \rangle,
$$
endowed with Neumann boundary conditions, \textit{i.e.},
$$
\langle \nabla f , \eta \rangle = 0 \quad \mbox{on} \quad \partial \Omega .
$$
In the sequel we denote $\C_N ^\infty (\Omega) $ ($N$ for Neumann) such a subspace of $\C^\infty (\Omega) $. Above $\Delta$ and $\nabla$ stand respectively for the Euclidean Laplacian and gradient and $\langle \cdot, \cdot \rangle $ is the scalar product. By integration by parts, we have for all $f,g \in \C ^\infty (\Omega)$,
\begin{eqnarray*}
\int_{\Omega} L f \, g \, d\mu & = & \int_\dO g \, \langle \nabla f , \eta \rangle \, d\mu - \int_\Omega \langle \nabla f , \nabla g \rangle \, d\mu \\
& = & \int_\dO \left( g\, \langle \nabla f , \eta \rangle - f\, \langle \nabla g , \eta \rangle \right) \, d\mu + \int_\Omega f \, Lg \, d\mu ,
\end{eqnarray*}
where by abuse of notation we still denote $\mu$ the measure on the boundary with density proportional to $e^{-V}$ with respect to the volume measure on $\dO$. Hence $L$ is symmetric and non-positive on $\C _N ^\infty (\Omega)$ and by completeness it admits a unique self-adjoint extension (still denoted $L$). In particular the (Neumann) spectrum $\sigma (-L)$ of the non-negative operator $-L$ is included in $[0,\infty )$, the zero eigenvalue corresponding to the constant eigenfunctions, and the first positive eigenvalue $\lambda_1 (\O, \mu)$ (denoted as such to emphasize the roles of the domain $\O$ and the probability measure $\mu$), called the spectral gap, is nothing but the optimal constant in the famous Poincar\'e inequality, that is, for all $g\in \C ^\infty (\O)$,
$$
\lambda_1 (\O,\mu) \, \Var_\mu (g) \leq \int_{\Omega} \vert \nabla g \vert ^2 \, d\mu,
$$
where $\Var_\mu (g)$ is the variance of function $g$ under $\mu$,
$$
\Var_\mu (g) = \int_\Omega \left( g - \int_\Omega g \, d\mu \right) ^2 \, d\mu.
$$
Note that the Neumann boundary conditions do not appear directly in the Poincar\'e inequality. \smallskip

Before turning to our first results, let us introduce some notation and definitions. By a matrix mapping (resp. an invertible matrix mapping, resp. a symmetric positive-definite matrix mapping) we mean a map defined on $\O$ and valued in $\mathcal{M}_d (\R)$, the space of $d\times d$ matrices with real entries (resp. in the subset of invertible matrices, resp. in the subset of symmetric positive-definite matrices). Given a smooth matrix mapping $M$ and a smooth vector field $F$ defined on $\Omega$, let $\nabla M$ and $\nabla F$ be respectively the matrix of gradients $(\nabla M_{i,j})_{i,j=1,\ldots,d}$ and the column vector of gradients $(\nabla F_i)_{i=1,\ldots,d}$. If $v \in \R^d$ then we define $\langle \nabla M , v \rangle$ and $\langle \nabla F , v \rangle$ to be respectively the matrix $(\langle \nabla M_{i,j} , v \rangle)_{i,j=1,\ldots,d}$ and the vector $(\langle \nabla F_{i} , v \rangle)_{i=1,\ldots,d}$. Moreover we define the vector field $\nabla M \, \nabla F$ by contraction as
$$
(\nabla M \, \nabla F)_i = \sum_{j=1} ^d \langle\nabla M_{i,j} , \nabla F_j \rangle.
$$
For two column vectors of gradients $\nabla F$ and $\nabla G$ and a symmetric matrix $M \in \mathcal{M}_d(\R)$, we define
$$
\left[ \nabla F\right] ^T M \, \nabla G = \sum_{i,j=1} ^d \langle\nabla F_i , M_{i,j} \nabla G_j \rangle.
$$
Above the superscript $T$ stands for the transpose of a vector or a matrix. Finally we denote $\rho(A)$ the smallest eigenvalue of a given symmetric matrix $A$ and say that $A$ is bounded from below by some constant $\kappa \in \R$ if $\rho(A) \geq \kappa$. If $\kappa=0$ we say that $A$ is non-negative.

\subsection{Brascamp-Lieb type inequalities and general spectral gap estimates}
\label{sect:main}

We start our analysis by stating an important lemma, which is more or less classical at least on the whole Euclidean space $\R^d$, and which might be seen as a dualized Brascamp-Lieb type inequality. Let us give the proof for completeness.
\begin{lem}
\label{lemme:K}
Let $\Omega \subset \R^d$ be a (connected) compact set with smooth boundary $\partial \Omega$ and outer unit-normal $\eta$. Assume that there exists some symmetric positive-definite matrix mapping $K$ such that for every $f \in \C ^\infty _N (\Omega)$,
$$
\int_\Omega (-Lf)^2 \, d\mu \geq \int_\Omega \left \langle \nabla f , K \nabla f \right \rangle \, d\mu .
$$
Then for every $g \in \C ^\infty (\Omega)$, we have the Brascamp-Lieb type inequality
\[
\Var_\mu(g) \leq \int_\O \left \langle \nabla g , K^{-1} \nabla g \right \rangle \, d\mu.
\]
In particular if the mapping $K$ is bounded from below (uniformly with respect to the space variable) by some $\kappa >0$ then the spectral gap of the operator $-L$ is lower bounded as follows:
$$
\lambda_1 (\O,\mu) \geq \kappa .
$$
\end{lem}
\begin{proof}
Letting $g \in \C ^\infty (\Omega)$ be centered, standard results for Neumann type Laplacians ensure the existence of a unique solution $f\in \C _N ^\infty (\Omega)$ to the Poisson equation $-Lf =g$. Then the trick is to write the variance as follows:
\begin{eqnarray*}
\Var_\mu(g) & = & 2 \, \int_\Omega g^2 \, d\mu - \int_\Omega g^2 \, d\mu \\
& = & 2 \, \int_\O  g \, (-Lf) \, d\mu - \int_\O (-Lf)^2 \, d\mu \\
& = & 2 \, \int_\O \left \langle  \nabla g , \nabla f \right \rangle \, d\mu - \int_\O (-Lf)^2 \, d\mu \\
& \leq & 2 \, \int_\O \langle  K^{-\frac{1}{2}} \nabla g , K^{\frac{1}{2}} \nabla f \rangle \, d\mu - \int_\O \langle \nabla f , K \nabla f \rangle \, d\mu \\
& = & \int_\O \langle \nabla g , K^{-1} \nabla g \rangle \, d\mu - \int_\O \vert K^{\frac{1}{2}} \nabla f - K^{-\frac{1}{2}} \nabla g \vert ^2 \, d\mu \\
& \leq & \int_\O \langle \nabla g , K^{-1} \nabla g \rangle \, d\mu .
\end{eqnarray*}
The proof of the spectral gap estimate is then straightforward.
\end{proof}

This approach, known to specialists as the $L^2$ method, is reminiscent of H\"ormander's work \cite{hormander} in the middle of the 60's for solving the Poisson equation associated to the operator $\bar{\partial}$ in complex analysis, and has been used then by several authors to establish Poincar\'e type inequalities. For instance we have in mind the famous (integrated version of the) $\Gamma_2$ curvature dimension criterion of Bakry and Emery \cite{bakry_emery} and also the work of Helffer \cite{helffer} for models arising in statistical mechanics. Moreover Klartag \cite{klartag} used this method to prove, among other things, the variance conjecture in the case of log-concave unconditional distributions, that is, having log-concave density which is invariant under coordinate hyperplane reflections. Recently, he refined the method to prove the KLS conjecture up top some logarithmic factor of the dimension, cf. \cite{klartag_KLS}. The presence of a weight in the inequalities above through the matrix mapping $K$, which is just a refinement of this approach, already appeared for instance in \cite{barthe_cordero, milman_koles} and in our previous papers \cite{ABJ,BJ}, both works aiming at estimating conveniently the spectral gap or higher eigenvalues in various situations of interest. \smallskip

Actually, considering matrix weights takes roots in the pioneer work \cite{brascamp_lieb} through the so-called Brascamp-Lieb inequality for strictly convex potentials $V$. In view of Lemma~\ref{lemme:K}, it corresponds to the mapping $K$ being $\nabla^2 V$, the Hessian matrix of $V$. 
Since we work on the domain $\Omega$, some extra boundary terms involving the second fundamental form appear when integrating the famous Bochner formula adapted to the measure $\mu$, cf. for instance \cite{klartag}. As we will see later, the boundary term is related to the geometry of the domain $\Omega$ in the sense that it reveals to be non-negative when $\O$ is convex. As such, the formula is the following (we omit the proof since it is included in the one of Lemma \ref{lemme:decompo} below): for all $f\in \C _N ^\infty (\Omega)$,
\begin{eqnarray}
\label{eq:classical}
\nonumber \int_\Omega (-Lf)^2 \, d\mu & = & \int_\Omega \Vert \nabla^2 f \Vert _{HS} ^2 \, d\mu + \int_\Omega \langle \nabla f , \nabla^2 V  \nabla f \rangle \, d\mu \\
& & + \int_\dO \langle \nabla f , \J \eta \, \nabla f \rangle \, d\mu,
\end{eqnarray}
where $\Vert A \Vert _{HS} = \sqrt{\sum_{i,j=1, \ldots, d} A_{i,j} ^2}$ stands for the Hilbert-Schmidt norm of a given matrix $A \in \mathcal{M}_d (\R)$ and $\J \eta = (\partial _j \eta _i)_{i,j = 1,\ldots, d}$ denotes the Jacobian matrix of $\eta$. The strict convexity of $V$ (\textit{i.e.}, $\nabla^2 V$ is a symmetric positive-definite matrix mapping) entails by Lemma \ref{lemme:K} the famous Brascamp-Lieb inequality on convex domains: for every $g \in \C ^\infty (\Omega)$,
$$
\Var_\mu(g) \leq \int_\O \langle \nabla g , \nabla ^2 V ^{-1} \nabla  g \rangle\, d\mu.
$$
Finally, if moreover $V$ is uniformly convex on $\O$, that is, $\nabla^2 V$ is uniformly bounded from below by some $\kappa >0$, then the spectral gap of the operator $-L $ on the convex domain $\O$ satisfies
\begin{equation}
\label{eq:unif_cvx}
\lambda_1 (\O , \mu) \geq \kappa ,
\end{equation}
which is the Euclidean version of the Bakry-Emery criterion \cite{bakry_emery}. \smallskip 

In order to reinforce this spectral gap estimate or even to obtain a relevant bound beyond this convex situation, our idea is to introduce some matrix weight in the decomposition \eqref{eq:classical}, freeing us from these strong convexity assumptions. This strategy is inspired by our previous works on the intertwinings, cf. \cite{ABJ,BJ}. See also the approach of Wang \cite{wang} in a different context. Here is our key lemma. Below $\L$ stands for the diagonal matrix operator acting on smooth vector fields $F$ as $\L F = (L F_i)_{i=1,\ldots, d}$ and the notation $W$ is used to remind us that it is interpreted as a weight, the unweighted version (\textit{i.e.}, $W$ is the identity) of our second identity \eqref{eq:weighted} corresponding to the classical decomposition \eqref{eq:classical}.
\begin{lem}
\label{lemme:decompo}
Let $\Omega \subset \R^d$ be a (connected) compact set with smooth boundary $\partial \Omega$ and outer unit-normal $\eta$. Let $W$ be some smooth invertible matrix mapping. Then for all $f \in \C ^\infty (\Omega)$, it holds
\begin{eqnarray*}
\int_\O (-Lf)^2 \, d\mu & = & \int_\O \left[ \nabla (W^{-1} \nabla f) \right] ^T W^T W \, \nabla (W^{-1} \nabla f) \, d\mu \\
& & + \int_\O \left \langle W^{-1} \nabla f , ( \nabla W^T \, W - W^T \, \nabla W ) \, \nabla (W^{-1}\nabla f) \right \rangle \, d\mu \\
& & + \int_\O \left \langle \nabla f , (\nabla^2 V - \L W \, W^{-1} ) \, \nabla f \right \rangle\, d\mu + \int_\dO Lf \, \langle \nabla f , \eta \rangle \, d\mu \\ & & - \int_\dO \langle \nabla f , \nabla^2 f \, \eta \rangle \, d\mu -  \int_\dO \left \langle \nabla f , W \, \langle \nabla W^{-1} , \eta \rangle \, \nabla f \right \rangle \, d\mu .
\end{eqnarray*}
Moreover, if $f$ satisfies the Neumann boundary conditions $\langle \nabla f , \eta \rangle = 0$, then
\begin{eqnarray}
\label{eq:weighted}
\nonumber \int_\O (-Lf)^2 \, d\mu & = & \int_\O \left[ \nabla (W^{-1} \nabla f) \right] ^T \, W^T W \, \nabla (W^{-1} \nabla f) \, d\mu \\
\nonumber & & + \int_\O \left \langle W^{-1} \nabla f , ( \nabla W^T \, W - W^T \, \nabla W ) \, \nabla (W^{-1}\nabla f) \right \rangle \, d\mu \\
\nonumber & & + \int_\O \left \langle \nabla f , (\nabla^2 V - \L W \, W^{-1} ) \, \nabla f \right \rangle\, d\mu \\
& & + \int_\dO \left \langle \nabla f , ( \J \eta - W \, \langle \nabla W^{-1} , \eta \rangle ) \, \nabla f \right \rangle \, d\mu .
\end{eqnarray}
\end{lem}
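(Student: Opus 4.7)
The plan is a direct computation via two successive integrations by parts, combined with the intertwining $\nabla L f = \L \nabla f - \nabla^2 V \, \nabla f$ (a consequence of $[\partial_i, L] = -\langle \nabla \partial_i V, \nabla \, \cdot \, \rangle$) and the substitution $\nabla f = W F$ with $F := W^{-1}\nabla f$. First I would integrate by parts once to write
$$
\int_\O (-Lf)^2 \, d\mu \, = \, \int_\dO Lf\, \langle \nabla f, \eta\rangle \, d\mu - \int_\O \langle \nabla L f, \nabla f\rangle \, d\mu,
$$
then substitute the intertwining identity and expand $\L \nabla f = \L (W F) = W \, \L F + \L W \, F + 2\, \nabla W \, \nabla F$ by applying the Leibniz formula $L(uv) = u L v + v L u + 2 \langle \nabla u, \nabla v\rangle$ entrywise to $W_{ik} F_k$ and summing. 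Pairing with $\nabla f = WF$ naturally produces the potential-type integrand $\langle \nabla f, (\nabla^2 V - \L W \, W^{-1}) \, \nabla f\rangle$ (since $\langle \L W \, F, W F\rangle = \langle \nabla f, \L W \, W^{-1} \nabla f\rangle$), a cross contribution $-2\, \langle \nabla f, \nabla W \, \nabla F\rangle$, and a principal term $-\langle W \L F, WF\rangle = -\langle \L F, W^T W F\rangle$.

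The second step is to integrate this principal term by parts componentwise, producing
$$
-\sum_{i=1}^d \int_\O L F_i \, (W^T W F)_i \, d\mu \, = \, \sum_{i=1}^d \int_\O \langle \nabla F_i, \nabla (W^T W F)_i\rangle \, d\mu - \int_\dO \langle \nabla f, W \, \langle \nabla F, \eta\rangle \rangle \, d\mu .
$$
Expanding $\nabla(W^T W F)_i = \sum_j F_j \, \nabla(W^T W)_{i,j} + \sum_j (W^T W)_{i,j} \, \nabla F_j$ by Leibniz separates the interior integrand into the principal quadratic form $[\nabla F]^T W^T W \, \nabla F$ and a derivative contribution; using $\nabla(W^T W) = \nabla W^T \, W + W^T \, \nabla W$, the latter simplifies to $\langle \nabla f, \nabla W \, \nabla F\rangle + \langle F, \nabla W^T \, W \, \nabla F\rangle$. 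Combining with the $-2\, \langle \nabla f, \nabla W \, \nabla F\rangle$ already present, and using the identity $\langle \nabla f, \nabla W \, \nabla F\rangle = \langle F, W^T \, \nabla W \, \nabla F\rangle$, one obtains precisely the skew cross term $\langle F, (\nabla W^T \, W - W^T \, \nabla W) \, \nabla F\rangle$ of the statement. This cancellation pattern is the computational heart of the lemma.

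It remains to identify the boundary terms. Differentiating $F = W^{-1}\nabla f$ entrywise in direction $\eta$ gives on $\dO$ the identity $\langle \nabla F, \eta\rangle = \langle \nabla W^{-1}, \eta\rangle \, \nabla f + W^{-1} \, \nabla^2 f \, \eta$, so the boundary contribution of the second IBP unpacks as $-\int_\dO \langle \nabla f, \nabla^2 f \, \eta\rangle \, d\mu - \int_\dO \langle \nabla f, W \, \langle \nabla W^{-1}, \eta\rangle \, \nabla f\rangle \, d\mu$; together with the $\int_\dO Lf \, \langle \nabla f, \eta\rangle \, d\mu$ from the first IBP this yields the three boundary integrals of the general identity. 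Under Neumann conditions, the first term vanishes, and tangential differentiation of $\langle \nabla f, \eta\rangle = 0$ yields $\langle \nabla^2 f \, \tau, \eta\rangle = -\langle \nabla f, \Jac \, \eta \, \tau\rangle$ for every tangent $\tau$; applying this with $\tau = \nabla f$ (tangent by Neumann) converts $-\langle \nabla f, \nabla^2 f \, \eta\rangle$ into $+\langle \nabla f, \Jac \, \eta \, \nabla f\rangle$, and the remaining boundary terms coalesce into $\int_\dO \langle \nabla f, (\Jac \, \eta - W \, \langle \nabla W^{-1}, \eta\rangle) \, \nabla f\rangle \, d\mu$, as claimed. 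The main obstacle throughout is bookkeeping the index contractions in the paper's matrix-of-vectors notation (identifying sums such as $\sum_{i} F_j \, \langle \nabla (W^T W)_{i,j}, \nabla F_i\rangle$ with the intended $\langle F, M \, \nabla F\rangle$ form); no analytic input beyond classical Bochner-type manipulations is required.
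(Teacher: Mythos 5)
Your proposal is correct and follows essentially the same two-integration-by-parts strategy as the paper. The only notable difference is one of packaging: the paper invokes the ready-made weighted intertwining identity $A\nabla Lf = (\mathcal{L}_A - \mathcal{M}_A)(A\nabla f)$ from \cite{ABJ} (with $A = W^{-1}$), whereas you derive the equivalent decomposition on the fly from the elementary unweighted intertwining $\nabla Lf = \mathcal{L}\nabla f - \nabla^2 V\,\nabla f$ combined with the componentwise Leibniz rule $L(uv) = uLv + vLu + 2\langle\nabla u,\nabla v\rangle$; the subsequent manipulations — second integration by parts on the principal term, identification of the skew cross term via $\langle \nabla f, \nabla W\,\nabla F\rangle = \langle F, W^T\nabla W\,\nabla F\rangle$, expansion of $\langle\nabla F,\eta\rangle$ on the boundary, and tangential differentiation of the Neumann condition to bring in $\Jac\,\eta$ — match the paper's computation line for line.
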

\begin{proof}
We use the notation $A = W^{-1}$ and $S = (A A^{T})^{-1}$. Recall first the intertwining between operators and (weighted) gradients introduced and studied in \cite{ABJ}:
\begin{equation}
\label{eq:intert}
A\nabla L f = (\L_A -\M_A) \, (A\nabla f),
\end{equation}
where $\L_A$ denotes the matrix operator acting on smooth vector fields as
$$
\L _A F =  \L F + 2 \, A \, \nabla A^{-1} \, \nabla F ,
$$
and $\M_A$ is the matrix corresponding to the multiplicative (or zero-order) operator
$$
\M_A = A \, \nabla ^2 V \, A^{-1} - A \, \L A^{-1} .
$$
We have by integration by parts and the intertwining identity above,
\begin{eqnarray*}
\int_\O (-Lf)^2 \, d\mu & = & \int_\O \langle \nabla f , \nabla (-Lf) \rangle \, d\mu + \int_\dO Lf \, \langle \nabla f ,\eta \rangle \, d\mu \\
& = & \int_\O \langle A \nabla f , S A \nabla (-Lf) \rangle\, d\mu + \int_\dO Lf \, \langle \nabla f ,\eta \rangle \, d\mu \\
& = & \int_\O \left \langle A \nabla f , S (-\L_A + \M_A ) (A\nabla f) \right \rangle \, d\mu + \int_\dO Lf \, \langle \nabla f ,\eta \rangle \, d\mu \\
& = & \int_\O \langle A \nabla f , S (-\L)(A\nabla f) \rangle \, d\mu - \int_\O \langle A \nabla f , 2 \, S A \, \nabla A^{-1} \, \nabla (A\nabla f) \rangle \, d\mu \\
& & + \int_\O \left \langle \nabla f , ( \nabla^2 V - \L A^{-1} \, A ) \, \nabla f \right \rangle \, d\mu + \int_\dO Lf \, \langle \nabla f , \eta \rangle \, d\mu ,
\end{eqnarray*}
since
$$
A^T \, S \, \M_A \, A = \nabla^2 V - \L A^{-1} \, A .
$$
Dealing with the first term in the right-hand-side above, a second integration by parts gives
\begin{eqnarray*}
\int_\O \langle A \nabla f , S (-\L)(A\nabla f) \rangle \, d\mu & = & \int_\O \left[ \nabla (A \nabla f) \right] ^T S \, \nabla (A \nabla f) \, d\mu + \int_\O \left \langle A \nabla f , \nabla  S \, \nabla(A \nabla f) \right \rangle \, d\mu \\
& & - \int_\dO \left \langle S A\nabla f , \langle \nabla ( A \nabla f) , \eta \rangle \right \rangle \, d\mu ,
\end{eqnarray*}
so that reorganizing the terms in the initial computations lead to
\begin{eqnarray*}
\int_\O (-Lf)^2 \, d\mu & = & \int_\O \left[ \nabla (A \nabla f) \right] ^T S \, \nabla (A \nabla f) \, d\mu + \int_\O \left \langle \nabla f , ( \nabla^2 V - \L A^{-1} \, A ) \, \nabla f \right \rangle \, d\mu \\
& & + \int_\O \left \langle A \nabla f , (\nabla  S - 2 \, S A \, \nabla A^{-1}) \, \nabla(A \nabla f) \right \rangle \, d\mu \\
& & - \int_\dO \left \langle S A\nabla f , \langle \nabla ( A \nabla f) , \eta \rangle \right \rangle \, d\mu + \int_\dO Lf \, \langle \nabla f , \eta \rangle \, d\mu \\
& = & \int_\O \left[ \nabla (A \nabla f) \right] ^T S \, \nabla (A \nabla f) \, d\mu + + \int_\O \left \langle \nabla f , ( \nabla^2 V - \L A^{-1} \, A ) \, \nabla f \right \rangle \, d\mu \\
& & + \int_\O \left \langle A \nabla f , \left( (\nabla  A^{-1})^T \, A^{-1} - ( A^{-1})^T \, \nabla A^{-1} \right) \, \nabla(A \nabla f) \right \rangle \, d\mu \\
& & - \int_\dO \langle \nabla f , \nabla^2 f \, \eta \rangle \, d\mu - \int_\dO \left \langle \nabla f , A^{-1} \, \langle \nabla A , \eta \rangle \, \nabla f \right \rangle \, d\mu \\
& & + \int_\dO Lf \, \langle \nabla f , \eta \rangle \, d\mu ,
\end{eqnarray*}
since we have
\begin{eqnarray*}
\int_\dO \left \langle S A\nabla f , \langle \nabla ( A \nabla f) , \eta \rangle \right \rangle \, d\mu & = & \int_\dO \left \langle \nabla f , A^{-1} \, \langle \nabla ( A \nabla f) , \eta \rangle \right \rangle \, d\mu \\
& = & \int_\dO \langle \nabla f , \nabla^2 f \, \eta \rangle \, d\mu + \int_\dO \left \langle \nabla f , A^{-1} \, \langle \nabla A , \eta \rangle \, \nabla f \right \rangle \, d\mu .
\end{eqnarray*}
Hence the first desired identity is proved. Finally under the Neumann boundary conditions $\langle \nabla f , \eta \rangle = 0$ we have
$$
0 = \nabla \langle \nabla f , \eta \rangle = \nabla^2 f \, \eta + (\J \eta )^T \, \nabla f,
$$
from which the announced result follows.
\end{proof}

\subsection{Proof of Theorem \ref{theo:main}}
\label{sect:proof_thm}
Now we are able to prove the first main result of our paper stated in the Introduction, Theorem \ref{theo:main}.
\begin{proof}[Proof of Theorem \ref{theo:main}]
Our aim is to use Lemma \ref{lemme:decompo} to find some convenient matrix mapping $K$ such that Lemma \ref{lemme:K} applies. To do so, we need to understand the four terms arising in the right-hand-side of \eqref{eq:weighted}. The first term is non-negative whereas the second one vanishes since $\nabla W^T \, W = W^T \, \nabla W$, the matrix weight $W$ being diagonal. The most important terms are the two last ones, for which the assumptions of Theorem \ref{theo:main} directly apply: since assumption $(A_2)$ means that the boundary term is non-negative, assumption $(A_1)$ allows us to choose the matrix mapping $K$ equal to $\nabla^2 V - \L W \, W^{-1}$.
\end{proof}



As we will see on the examples, choosing a convenient matrix weight $W$ which ensures simultaneously the conditions $(A_1)$ and $(A_2)$ leading to the desired spectral gap estimate is not an easy task since these conditions are not of the same nature \textit{a priori}. Indeed assumption $(A_1)$, which already appeared in our previous study \cite{ABJ} on the whole space and provides the desired spectral gap estimate, strongly depends on the dynamics through the presence of the Hessian matrix of $V$ and the matrix operator $\L$ whereas $(A_2)$ does not depend on $V$ but only on the geometry of the boundary of the domain.
Therefore the strategy in the sequel is to find some convenient diagonal weight $W$ balancing these two conditions $(A_1)$ and $(A_2)$.


\section{A class of convex bodies}
\label{sect:body}

We concentrate in this part on smooth convex bodies, that is, compact, convex sets of $\R^d$ with non-empty interior and smooth boundary. Before turning to the applications to this convex framework of our main result Theorem \ref{theo:main}, let us briefly recall some elements of the literature in this context. In view of the famous KLS conjecture, many quantitative spectral gap estimates have been established in this convex setting by several authors during the last three decades, using various methods (isoperimetry, optimal transport, etc.). For instance among those important papers we may cite the pioneer work of Kannan, Lov\'asz and Simonovits \cite{kls}, Bobkov's paper  \cite{bobkov_AOP}, or Milman's article \cite{milman}. In particular, an approach which revealed to be efficient is the so-called localization method (or needle decomposition) which have been originally formalized by Payne and Weinberger \cite{payne_weinberger} in the 60s and further developed later in the 80s by Gromov and Milman \cite{gromov_milman} and also in \cite{kls}, allowing the authors to reduce the multidimensional case to a one-dimensional problem. For instance the (log-concave version of the) famous Payne-Weinberger inequality, whose proof relies on this method, is the following: if the potential $V$ is convex on the smooth convex body $\O$, then
\begin{equation} 
\label{eq:Payne_Wein}
\lambda_1 (\O , \mu ) \geq \frac{\pi ^2}{\diam (\O ) ^2},
\end{equation}
where $\diam (\O )= \sup_{x,y \in \O} \vert x-y\vert $ stands for the diameter of $\O$. Although optimal with respect to the diameter by looking at the uniform distribution on one-dimensional intervals, such an estimate still leaves room for improvement in higher dimension since by tensorization the spectral gap for product measures is independent from the dimension 
whereas Payne-Weinberger's inequality \eqref{eq:Payne_Wein} applied for instance to the uniform distribution on the hypercube exhibits a bound of order $1/d$. \smallskip 

In this part we apply Theorem \ref{theo:main} for log-concave measures on smooth convex bodies and improve Payne-Weinberger's estimate under various assumptions. These results correspond to the two other main results of the paper, Corollaries \ref{corol:main2bis} and \ref{corol:main3}. Let us start by introducing the context in which our study takes place. In most of the cases of interest the domain $\O$ we consider is of the form
$$
\Omega = \{ x\in \R^d : F(x) \leq 0 \} ,
$$
where $F$ is some smooth convex function defined on $\R^d$. Then the boundary is described by the algebraic equation $F = 0$, the outer unit-normal is given by $\eta = \nabla F / \vert \nabla F \vert$ provided the gradient does not vanish at the boundary. The second fundamental form of the boundary is defined as the following quadratic form: for all vectors $u,v \in T_x \O = \{ m\in \R^d : \langle m, \eta (x) \rangle = 0 \}$, the tangent space at point $x\in \dO$, 
$$
\langle u , \J \eta (x) \, v \rangle =  
\frac{1}{\vert \nabla F (x) \vert } \, \langle u , \nabla^2 F (x) \, v \rangle ,
$$
so that the convexity of $\O$ (induced by the convexity of $F$) implies that the second fundamental form is non-negative. In the sequel, even if not explicitly written, we will always let the matrix $\J \eta (x)$ (and also the various expressions involving it) act on the tangent space $T_x \O$ only. Moreover, in order to lighten the notation, we will often omit the space variable by writing $\J\eta $. The eigenvalues of the second fundamental form are called the principal curvatures and we denote $\rho (\J\eta )$ the smallest one (depending on the space variable). \smallskip

Three interesting cases we will consider are the following:

$\circ$ The uniform distribution on $\O$, \textit{i.e.}, the normalized volume measure with probability density function $1_\O / \vol (\O)$. The potential $V$ is null and the underlying operator $L$ is the Laplacian $\Delta$ on $\O$. In this context we denote for simplicity the spectral gap $\lambda_1 (\O)$. \smallskip 

$\circ$ A log-concave probability measure related to a uniformly convex potential $V$ on $\O$. Such measures include for instance the standard Gaussian case, \textit{i.e.}, $V = \vert \cdot \vert ^2 /2$.\smallskip 

$\circ$ A radial probability measure on $\O$: the associated potential $V: \O \to \R$ only depends on the Euclidean norm, that is, $V(x) = V(\vert x\vert )$ (using an obvious abuse of notation). In this case $\mu$ is log-concave as soon as the one-dimensional function $V$ is convex and non-decreasing on $\R^+$. A typical example of a radial log-concave probability measure is the Subbotin distribution, \textit{i.e.}, $V = \vert \cdot \vert ^\alpha /\alpha$ with $\alpha \geq 1$. \smallskip


\smallskip
In order to apply Theorem \ref{theo:main}, we will need some relevant assumption on the convex body $\O$. To do so, some preparation is required. Assume first that the origin lies in the interior of the convex body (in the sequel we note $0 \in \Int (\O )$). Let $R_{\max}$ denotes the smallest positive number such that $\O \subset \B (0,R_{\max})$, the (closed) Euclidean ball centered at the origin and with radius $R_{\max}$. Then it is easily seen that $R_{\max}$ is of order of the diameter of $\Omega$. Moreover we have $\langle x, \eta (x) \rangle \geq 0$ for all $x\in \dO$, see for instance Section 1.3 in \cite{schneider}. Actually we even have $\langle x, \eta (x) \rangle \geq R_{\min} $ where $R_{\min} (>0)$ is the largest positive number such that $\B (0,R_{\min}) \subset \O $. Consider the following assumption: there exists some $\beta >0$ such that
\begin{equation}
\label{eq:beta_1}
\J \eta (x) \geq \beta \, \frac{\langle x, \eta (x) \rangle }{r^2} \, I , \quad x\in \dO ,
\end{equation}
where above and in the remainder of the paper we denote $r = \vert x\vert $ to simplify the notation. It is easy to show that this assumption is equivalent to the fact that $\O$ is uniformly convex, \textit{i.e.}, the second fundamental form is uniformly bounded from below by some positive constant (however it is a weaker assumption when the convex domain $\Omega$ is unbounded). Indeed since $r\in [R_{\min }, R_{\max}]$ at the boundary, the optimal constants satisfy 
$$
\inf _{\dO} \rho (\J \eta ) \geq \beta R_{\min }/ R_{\max} ^2 \quad \mbox{ and } \quad \beta \geq \inf _{\dO} \rho (\J\eta ) \, R_{\min}.
$$ 
Actually, the assumption \eqref{eq:beta_1} already appeared as a key hypothesis in the work of Kolesnikov and Milman \cite{KM} about Brascamp-Lieb type inequalities and spectral gap estimates. We will come back to their article in a moment. Note also that the right-hand-side of \eqref{eq:beta_1}, which is well-defined since $r \geq R_{\min} >0$ at the boundary, depends on the position of the origin contrary to the uniform convexity assumption. Dealing now with the key parameter $\beta$, we point out that we always have $\beta \leq 1$. Indeed, let $x_0 \in \dO$ be a point intersecting the boundary and the sphere of radius $R_{\min}$ and centered at the origin. On the one hand the outer unit-normal at $x_0$ is the same for $\Omega$ and $\B(0,R_{\min})$ and is given by $\eta (x_0) = x_0 /R_{\min}$, so that the assumption \eqref{eq:beta_1} gives
$$
\J \eta (x_0) \geq \frac{\beta}{R_{\min}} \, I .
$$
On the other hand at point $x_0$ the second fundamental form is at most that of the ball $\B(0,R_{\min})$, which is $(1/R_{\min }) I$. Combining those two arguments yields the desired conclusion. In particular $\beta =1$ when $\Omega $ is an Euclidean ball centered at the origin. \smallskip

Assume now that $0 \notin \O$ (now and in the remainder of the paper we will not consider the unessential case $0 \in \dO$ since it would require some additional technicalities in the approximation procedures). It leads to some important observations: on the one hand there exist some points  $x\in \dO$ of the boundary such that $\langle x, \eta (x) \rangle < 0$ and on the other hand $R_{\max}$ might be much larger than the diameter of $\O$ when $d(0,\O )$, the distance from $\O$ to the origin, is very large. In particular the right-hand-side in assumption \eqref{eq:beta_1} could be non-positive, meaning that $\O$ would no longer necessarily be convex. To preserve convexity, our idea is to slightly modify \eqref{eq:beta_1} by considering the following assumption: 
\begin{equation}
\label{eq:beta_plus}
\J\eta (x) \geq \beta \, \frac{\langle x, \eta (x) \rangle ^+ }{r^2} \, I , \quad x\in \dO,
\end{equation}
where the $+$ denotes the positive part. 
Such assumption, which coincides with \eqref{eq:beta_1} when $0 \in \Int (\O )$, is the key hypothesis we will use in the sequel. However \eqref{eq:beta_plus} does not necessarily entail that $\beta \leq 1$ and moreover it is not equivalent to the uniform convexity of the domain (it is actually a weaker assumption since the optimal $\beta$ in \eqref{eq:beta_plus} satisfies $\beta \geq \inf_{\dO} \rho (\J \eta ) \, d(0,\O)$).

\subsection{General log-concave probability measures}
\label{sect:general_LC}
We are now in position to state the first important consequence of Theorem \ref{theo:main}. It corresponds to the second main result of our paper.
\begin{corol}
\label{corol:main2bis}
Consider a log-concave probability measure $\mu$ on a smooth convex body $\O \subset \R^d$ ($d>3$). Assume that there exists some $\beta >0$ such that \eqref{eq:beta_plus} is satisfied and
\begin{equation}
\label{eq:beta_V}
\nabla^2 V (x) \geq \beta \, \frac{\langle x ,\nabla V (x) \rangle ^+ }{r^2} \, I, \quad x\in \O.
\end{equation}
Then the generalized Brascamp-Lieb inequality holds: for all $ g \in \C ^\infty (\Omega)$,
\begin{equation}
\label{eq:GBL_r2}
\Var _\mu (g) \leq \frac{1}{\beta (d-2 - \beta )} \, \int_{\Omega} r^2 \, \vert \nabla g \vert ^2 \, d\mu .
\end{equation}
In particular the spectral gap satisfies
$$
\lambda_1 (\O,\mu) \geq \frac{\beta (d-2 - \beta )}{ R_{\max} ^2}.
$$
\end{corol}
\begin{proof}
Since the generalized Brascamp-Lieb inequality implies directly the desired spectral gap estimate, let us prove only \eqref{eq:GBL_r2}. Given $\varepsilon >0$, we choose some weight $W_\varepsilon $ which is a multiple of the identity and radial, say $W_\varepsilon  (x) = w_\varepsilon  (r) \, I$ for all $x\in \Omega$, where $w_\varepsilon (r)  = (\varepsilon + r^2)^{-\beta /2}$. Then the quantities of interest appearing in the assumption $(A_2)$ of Theorem \ref{theo:main} rewrite as follows: at the boundary $x\in \dO$, we have
\begin{align*}
\J \eta (x) - W_\varepsilon (x)  \, \langle \nabla W_\varepsilon ^{-1} (x) , \eta (x) \rangle & = \J\eta (x) + \frac{\langle \nabla w_\varepsilon(r) , \eta (x) \rangle}{w_\varepsilon(r) } \, I \\
& = \J\eta (x) + \frac{w_\varepsilon ' (r) }{w_\varepsilon (r)} \, \frac{\langle x , \eta (x) \rangle}{r}  \, I \\
& = \J\eta (x) - \frac{\beta r }{\ve + r^2} \, \frac{\langle x , \eta (x) \rangle}{r}  \, I \\
& \geq \left( \beta - \frac{\beta r^2 }{\ve + r^2} \right) \, \frac{\langle x , \eta (x) \rangle ^+}{r^2}  \, I. 
\end{align*}
Hence the assumption $(A_2)$ of Theorem \ref{theo:main} is satisfied regardless of the value of $\varepsilon$. Now let us come back to assumption $(A_1)$ of Theorem \ref{theo:main} and see if it is satisfied with this choice of function $w_\varepsilon$. We have for all $x\in \O$, 
\begin{align*}
\frac{\Delta w_\ve (r)}{w_\ve (r)} & = \frac{w_\ve '' (r)}{w_\ve (r)} + \frac{d-1}{r} \, \frac{w_\ve '(r)}{w_\ve (r)} \\
& = \frac{\beta (\beta +1) r^2 - \beta \ve}{(\ve + r^2)^2} - \frac{\beta (d-1)}{\ve + r^2},   
\end{align*}
so that we get 
\begin{eqnarray*}
\nabla^2 V (x) - \L W_\varepsilon(x) \, W_\varepsilon ^{-1} (x) & = & \nabla^2 V (x) + \left( \frac{- \Delta w_\varepsilon (r)}{w_\varepsilon(r)} + \frac{\langle \nabla w_\varepsilon (r) , \nabla V (x) \rangle}{w_\varepsilon (r)} \right) \, I \\
& = & \nabla^2 V (x) + \left( \frac{\beta d \varepsilon + \beta (d-2-\beta ) r^2}{(\varepsilon +r^2)^2} - \frac{\beta \langle x, \nabla V (x) \rangle }{\varepsilon + r^2} \right) \, I \\
& \geq & \left( \frac{\varepsilon \, \beta \, \langle x , \nabla V (x) \rangle ^+}{(\varepsilon + r^2 ) r^2 } + \frac{\beta d \varepsilon + \beta (d-2-\beta ) r^2}{(\varepsilon +r^2)^2} \right) \, I \\
& \geq & \frac{\beta d \varepsilon + \beta (d-2-\beta ) r^2}{(\varepsilon +r^2)^2} \, I,
\end{eqnarray*}
according to the assumption \eqref{eq:beta_V}. Hence by Theorem \ref{theo:main} we obtain the following generalized Brascamp-Lieb inequality: for all $ g \in \C ^\infty (\Omega)$,
$$
\Var _\mu (g) \leq \int_{\Omega} \frac{(\varepsilon + r^2)^2}{\beta d \varepsilon + \beta (d-2-\beta) r^2} \, \vert \nabla g \vert ^2 \, d\mu .
$$
Finally, using the dominated convergence theorem on the compact set $\O$ as $\varepsilon $ tends to 0 yields to the desired generalized Brascamp-Lieb inequality \eqref{eq:GBL_r2}. The proof is now complete.
\end{proof}

Comparing to Payne-Weinberger's estimate \eqref{eq:Payne_Wein}, our result is relevant in most cases except one: when $R_{\max}$ is large compared to the diameter. Such a situation occurs only when $d(0,\O ) $ is very large. We will address this problem later in Section \ref{sect:subbotin} by considering the example of the Subbotin distribution. \smallskip 

As announced, we come back to the work of Kolesnikov and Milman \cite{KM} and in particular their Theorem 6.7 established by a conformal change of (Riemannian) metric combined with Bakry-Emery criterion. Actually, Corollary \ref{corol:main2bis} recovers their result (with slightly better constants) when $0 \in \Int (\O )$ and extends it to more general log-concave probability measures than the uniform distribution. It is tempting to wonder if our intertwining method can be seen as a rewriting of their approach by choosing conveniently in \eqref{eq:intert} the matrix weight $A$. After a careful reading of their paper and some attempts to obtain clear correspondences, it seems to us that both approaches are not equivalent, although our choice of function $w_\varepsilon$ in our proof corresponds to their conformal transformation. The only situation for which both approaches coincide is the case of product metrics, the latter being relevant in the study of unconditional log-concave probability measures. \smallskip

For the second assumption \eqref{eq:beta_V} involving also the potential $V$ on the convex body $\Omega$, Corollary \ref{corol:main2bis} is relevant in the three aforementioned frameworks: 

$(i)$ $V \equiv 0$ and $\mu$ is the uniform distribution on $\O$; 

$(ii)$ $V$ is uniformly convex; 

$(iii)$ $V$ is convex and radial. \smallskip 

\noindent Let us now investigate in detail how this result may be applied to these situations.

\subsection{The uniform distribution}
\label{sect:unif}
Let us start by case $(i)$. Dealing with the uniform distribution, the spectral estimate of Corollary \ref{corol:main2bis} has to be interpreted as a spectral gap comparison between convex bodies and Euclidean balls. Given a radius $R>0$, it is more or less known to specialists (see for instance our previous article \cite{bjm} for a proof based on the underlying radial structure) that $\lambda_1 (\B (0,R))$ is of order $d/R^2$ (recalled in the Appendix, its exact expression does not exhibit an explicit behaviour with respect to the dimension). Actually, Corollary \ref{corol:main2bis} enables to recover easily this estimate since we have $\beta = 1$ and thus we get in dimension $d >3$,
$$
\lambda_1 ( \B (0,R) ) \geq \frac{d-3}{R^2} .
$$
We mention that a similar estimate which is available in dimension 2 and 3 might be obtained by rather considering in the proof of Corollary \ref{corol:main2bis} the radial function $w(r) = \exp \, (- r^2 /2R^2)$, leading to the slightly better estimate $\lambda_1 (\B (0,R)) \geq (d-1)/R^2$. See also the discussion in the Appendix. Now the question is the following: since Weinberger \cite{weinberger} proved the following inequality:
$$
\lambda_1 ( \O ) \leq \left( \frac{\vol (\B (0,1)) }{\vol (\O )} \right) ^{2/d} \, \lambda_1 ( \B (0,1)),
$$
\textit{i.e.}, the spectral gap $\lambda_1 ( \B (0,1)) $ of the Euclidean unit ball $\B (0,1)$ maximizes all the spectral gaps $\lambda_1 (\O )$ of bounded domains $\Omega$ with the same volume, does the reverse Weinberger inequality hold (up to some constants) for a convenient class of domains so that both spectral gaps would be of the same order ? Note that the convexity is a reasonable assumption in order to avoid bottlenecks (and thus arbitrarily small spectral gaps), but it is clearly not sufficient as suggested by the dimension free estimate for the hypercube. Actually, the desired inequality reveals to be true by reformulating the conclusion of Corollary \ref{corol:main2bis} at least under the assumption \eqref{eq:beta_plus}. This observation leads to the following result, which is the announced spectral gap comparison. Since in the uniform case the spectral gap is translation invariant, we can assume that $0 \in \Int ( \O )$ and the position of the origin has then to be optimized to obtain the best possible estimate. 
\begin{prop}
\label{prop:comparison}
Assume that the smooth convex body $\O \subset \R^d$ with $0 \in \Int (\O )$ satisfies the assumption \eqref{eq:beta_plus}. Then we have the spectral gap comparison
$$
\lambda_1 ( \O ) \geq \frac{\beta (d-2-\beta) }{d+2} \left( \frac{R_{\min}}{R_{\max}}\right) ^2 \left( \frac{\vol (\B (0,1)) }{\vol (\O )} \right) ^{2/d} \lambda_1 ( \B (0,1)) .
$$
\end{prop}
\begin{proof}
The proof is straightforward from Corollary \ref{corol:main2bis}. Indeed we have the estimate 
$$
\lambda_1 (\O ) \geq \frac{\beta (d-2 - \beta )}{ R_{\max} ^2}.
$$
Now we have $\B (0, R_{\min}) \subset \O $ leading to the volume comparison
$$
(R_{\min}) ^d \, \vol (\B (0,1)) \leq \vol (\O ) .
$$
Testing on linear functions, the spectral gap $\lambda_1 ( \B (0,1))$ is easily bounded from above as follows:
$$
\lambda_1 ( \B (0,1))  \leq \frac{d}{\int_{\B (0,1)} \vert x\vert ^2 \, dx } = \frac{d \, \int_0 ^1 r^{d-1} \, dr }{\int_0 ^1 r^{d+1} \, dr } = d+2.
$$
Finally combining all these inequalities entails the desired result.
\end{proof}
In particular, this estimate is relevant as soon as $R_{\min}$ and $R_{\max}$ are of the same order, meaning in some sense that the convex body $\O$ is close to an Euclidean ball. We refer to Milman's paper \cite{milman} and in particular Section 5 in which the author obtains several spectral gap comparison results for convex bodies under volume preserving perturbations or in terms of total variation distance. \smallskip

Before turning to the case $(ii)$ of a uniformly convex potential $V$, let us justify at least for the ball why we may assume in Proposition \ref{prop:comparison} that $0 \in \Int (\O )$. Let $\O$ be the Euclidean ball $\B (a,R)$ of radius $R$ and centered at point $a = a_1 e_1$ with $a_1 \geq 0$, where $e_1$ denotes the first vector of the standard canonical basis of $\R^d$. For $x$ at the boundary, we have $\eta (x) = x/R$, $\J\eta (x) = (1/ R) \, I$ and 
\begin{align*}
\frac{\langle x, \eta (x) \rangle ^+}{r^2} 
& = \frac{\langle a_1 e_1 + R \eta(x), \eta(x) \rangle ^+}{\Vert a_1 e_1 + R \eta(x) \Vert^2}\\
& = \frac{\left( a_1 \langle e_1, \eta(x) \rangle + R \right) ^+}{ a_1 ^2 + R^2  + 2 R a_1 \langle e_1,\eta(x) \rangle }.
\end{align*}
Thus
\[
\sup_{x\in \partial \Omega} \frac{\langle x, \eta (x) \rangle ^+}{r^2}  = \sup_{u\in[-1,1]} \frac{\left( a_1 u + R\right) ^+}{ a_1 ^2 + R^2  + 2 R a_1 u } .
\]
If $0 \in \Int (\B (a,R) ) $, that is if $a_1 < R$, then the supremum is attained at $u=-1$ and has value $1/(R-a_1)$ and the condition \eqref{eq:beta_plus} holds if and only if  
\[
\beta \leq 1 - \frac{a_1}{R} \, (\leq 1).
\]
If now $0 \notin \B (a,R) $, that is if $a_1 > R$, then 
the supremum is attained at $u= 1$ and has value $1/(R+ a_1)$ so that the condition \eqref{eq:beta_plus} holds if and only if 
\[
\beta \leq 1+ \frac{a_1}{R}.
\]
Denote $\beta_{\max}$ the above bound on $\beta$ according to the situation we consider. Then the best lower bound on the spectral gap appearing in Corollary \ref{corol:main2bis} is   
\[
\frac{\beta_{\max} (d-2-\beta_{\max})}{R_\tmax^2} = \frac{\beta_{\max} (d-2-\beta_{\max})}{(R+a_1)^2} .
\]
Of course $\beta_{\max}$ has to be $< d-2$ to entail a relevant estimate. In the case $a_1 < R$, we have $\beta_{\max} = 1- a_1 /R$ and the previous bound behaves asymptotically as $(d-3)/R^2$ as $a_1$ is small (it is actually reached for $a_1 = 0$) whereas if $a_1 > R$, $\beta_{\max} = 1+ a_1 /R$ and
\[
\frac{\beta_{\max} (d-2-\beta_{\max})}{(R+a_1)^2} = \frac{d-3 - \frac{a_1}{R}}{R^2 \left( 1+ \frac{a_1}{R} \right)},
\]
which is always $< (d-3)/R^2$. 

\subsection{The uniformly convex case}
\label{sect:unif_convex}
We consider briefly the case $(ii)$ for which the potential $V$ is uniformly convex on $\O$: there exists $\alpha_1 >0$ such that $\nabla^2 V (x) \geq \alpha_1 \, I$ for all $x\in \O$. Assume to simplify that $0 \in \Int (\O )$  and $V$ attains its unique mininum at the origin. By compactness there exists $\alpha_2 >0$ such that $\nabla^2 V (x) \leq \alpha_2 \, I,$ $x\in \O$. Then the assumption \eqref{eq:beta_V} holds for all $\beta \in (0,\alpha_1/\alpha_2]$ hence Corollary \ref{corol:main2bis} applies for all $\beta \in (0,\alpha_1/\alpha_2]$ such that the assumption \eqref{eq:beta_plus} is satisfied. If the convex domain $\Omega$ is not bounded, that is, $R_{\max}$ is infinite, but still satisfies the assumption \eqref{eq:beta_plus}, then the spectral estimate becomes irrelevant but the generalized Brascamp-Lieb inequality \eqref{eq:GBL_r2}  remains available under the assumption that $\alpha _2$ is finite, concerning possibly interesting and non classical situations (we have in mind for instance a two dimensional standard Gaussian distribution restricted to the epigraph of a parabola including the origin). 

\subsection{The Subbotin distribution}
\label{sect:subbotin}
Let us finally consider the radial log-concave case $(iii)$. We have $\nabla V (x) = V'(r) x/r$ and  
$$
\nabla ^2 V (x) = \frac{V'(r)}{r} \, I + \left( V''(r) - \frac{V'(r)}{r} \right) \, \frac{x x^T}{r^2}, \quad x\in \Omega .
$$
The eigenvalues are $V''(r)$ and $V'(r)/r$ with respective eigenspace $\R x$ and $(\R x) ^\perp$, its orthogonal complement. Hence a sufficient condition ensuring the assumption \eqref{eq:beta_V} is the following:  
$$
\min \left \{ V''(r), \frac{V'(r)}{r}\right \} \geq \frac{\beta \, V'(r)}{r}, \quad x\in \O .
$$

The aim of this part is to investigate the particular case of the Subbotin distribution, which is an interesting example worthy to be investigated in detail. Recall that the potential $V$ on the convex body $\O$ is of the form $V = \vert \cdot \vert ^\alpha /\alpha$ for $\alpha > 1$ (actually, the case $\alpha = 1$ could also be considered as well, but would require a slight modification of the argument below). By \eqref{eq:unif_cvx} we have the estimate
\begin{align*}
\lambda_1 \left( \O , \mu \right) & \geq \inf_{x\in \O} \, \min \left \{ V''(r) , \frac{V'(r)}{r}\right \} \\
& = \inf_{x\in \O} \, \min \{ 1, \alpha-1 \} \, r^{\alpha-2} .
\end{align*}
For $\alpha \geq 2$ it yields 
\begin{equation}
\label{eq:BE_Sub2}
\lambda_1 \left( \O , \mu \right) \geq d( 0, \O)^{\alpha-2}.
\end{equation}
In particular if $0 \in \Int (\O )$ then the potential $V$ is not uniformly convex at the origin. For $\alpha \in (1,2]$ it is uniformly convex and we have 
\begin{equation}
\label{eq:BE_Sub}
\lambda_1 \left( \O , \mu \right) \geq (\alpha-1 ) \, R_{\max} ^{\, \alpha-2}.
\end{equation}
Actually, we will see that, except for the standard Gaussian case $\alpha=2$, these bounds are not sufficient in general and have to be reinforced to reach the sharp order of the spectral gap with respect to the dimension. For instance if $\O$ is the ball $\B (0,R)$ for some $R>0$, we expect a competition between two different regimes, depending on the position of $R$ $(= R_{\max})$ with respect to the average value $\int_{\R^d} \vert x \vert \, d\mu (x)$, which is of order $d^{1/\alpha}$: when $R \ll d^{1/\alpha}$ the spectral gap should be comparable to that of the uniform distribution on the ball $\B (0,R)$, which is of order $d/R^2$ as we have seen previously, since in this case the Subbotin distribution is close to the uniform law on $\B (0,R)$ (for instance in total variation distance), whereas for large $R \gg d^{1/\alpha}$ the regime we expect should be similar to that of the Subbotin on the whole space $\R^d$, which is approximatively $d^{1- 2/\alpha}$, cf. \cite{bjm}. In all these cases we observe that the estimate \eqref{eq:BE_Sub} obtained by using the uniform convexity is not sufficient to reach the expected results. \smallskip

Now it is time to state our result which can be seen as a refinement of Corollary \ref{corol:main2bis} in the Subbotin case. Below we allow the parameter $\beta$ given by \eqref{eq:beta_plus} to be null since some of the results are still relevant for general convex bodies (for the other situations it does not bring any information since the lower bound obtained on the spectral gap vanishes). Similarly, we state the result for $d >3$ but some of the statements are relevant for $d=2$ or $d=3$ as long as the lower bound obtained on the spectral gap is positive (although for small dimension our results are somewhat comparable to Payne-Weinberger's inequality \eqref{eq:Payne_Wein} as soon as $\diam (\Omega)$ and $R_\tmax$ are of the same order).
\begin{prop}
\label{prop:subbotin}
Let $\O \subset \R^d$ ($d>3$) be a smooth convex body satisfying \eqref{eq:beta_plus} for some $\beta \geq 0$. Let $\mu$ be the Subbotin distribution with parameter $\alpha >1 $ on $\O$. Denote $\beta_\alpha = \min \{ \beta ,1, \alpha-1 \}$. Then the spectral gap satisfies: 
\begin{itemize}
\item If $\alpha >2$, then we have 
\[
\lambda_1 \left( \O , \mu \right) \geq  \max \left\{ \frac{ \beta_\alpha (d-2- \beta_\alpha)}{R^2_{\max}}, \,
C_{\alpha} \, \beta_\alpha \, \left(d- 2 -\frac{\beta_\alpha}{2}\right)^{1- 2/\alpha} , d( 0, \O)^{\alpha-2} \right\} ,
\]
with
$
C_{\alpha} = \frac{\alpha}{4} \left( \frac{\alpha-2}{2}  \right)^{(2-\alpha)/\alpha}.
$ \smallskip

\item If $\alpha \in (1,2]$, then 
\[
\lambda_1 \left( \O , \mu \right) \geq \max \left \{ \frac{ \beta_\alpha \,  (d+ \alpha -2 - \beta_\alpha ) }{R^2_{\max}} , (\alpha-1) \, R_{\max} ^{\, \alpha-2} \right \}.
\]
In the case $\alpha \in (1,2)$, if moreover $0 \in \Int (\O )$ then the estimate can be improved for large $R_{\max}$ as 
\[
\lambda_1 \left( \O , \mu \right) \geq \frac{\alpha }{4} \left( \frac{2-\alpha} {\alpha-1}\right) ^{1-2/\alpha} \, (d+\alpha-2)^{1-2/\alpha} . 
\]
\end{itemize}
\end{prop}
\begin{proof}
We first consider the case $\alpha >2$. Then we take $W = w I $ with $w$ the radial function $w(r)=r^{-b}$ and $b>0$ to be chosen later (if $0 \in \Int (\O )$ then a regularization procedure similar to that emphasized in the proof of Corollary \ref{corol:main2bis} is required; we omit the details). First as before, the hypothesis $\eqref{eq:beta_plus}$ allows us to bound from below the term at the boundary $x\in \dO$ as follows: 
\begin{align*}
\nonumber \J\eta (x) - W (x)  \, \langle \nabla W ^{-1} (x) , \eta (x) \rangle & = \J \eta (x) + \frac{w ' (r) }{w(r)} \, \frac{\langle x , \eta (x) \rangle}{r}  \, I \\
\label{eq:bord-1-sur-rb} 
& = \J \eta (x) - b \, \frac{\langle x , \eta (x) \rangle}{r^2}  \, I \\
\nonumber & \geq \left( \beta - b \right) \, \frac{\langle x,\eta (x) \rangle ^+}{r^2} \, I.
\end{align*}
Hence the assumption  $(A_2)$ of Theorem \ref{theo:main} is satisfied provided $b \leq \beta$. We now concentrate on the term inside the domain in assumption $(A_1)$. For $x\in \O$, the smallest eigenvalue of the matrix $\nabla^2 V (x) - \L W (x) \, W^{-1} (x)$  is given by
\begin{equation}\label{eq:subb:1-r}
\rho \left( \nabla^2 V (x) - \L W (x) \, W^{-1} (x)\right)  =  (1- b)   \, r^{\alpha-2} + \frac{b (d-2-b)} {r^2} .
\end{equation} 
On the one hand taking $b= \beta_\alpha = \min \{ \beta,1 \}$ yields for all $x\in \O$,
$$
\rho \left( \nabla^2 V (x) - \L W (x) \, W^{-1} (x)\right) \geq \frac{ \beta_\alpha (d-2- \beta_\alpha)}{R^2_{\max}} .
$$
On the other hand when $R_\tmax $ is large, it is better to keep the first term in \eqref{eq:subb:1-r} and to bound from below the whole expression by the minimum on $(0,\infty)$ of the function
\[
g:r  \mapsto (1- b)   \, r^{\alpha-2} + \frac{b (d-2-b)} {r^2}.
\]
It is obtained in 
$$
r_0 = \left( \frac{2 b(d-2-b)} {(\alpha-2) (1-b)} \right)^{1/\alpha},
$$ 
and has value
\[
g(r_0) = \alpha \left( \frac{1-b}{2}  \right)^{2/\alpha} \left( \frac{b(d-2-b)}{\alpha- 2}  \right)^{1 - 2/\alpha}. 
\]
Choosing for simplicity $b = \beta_\alpha /2$ leads to the more presentable lower bound:
$$
g(r_0) \geq \frac{\alpha}{4} \left( \frac{\alpha-2}{2}  \right)^{(2-\alpha)/\alpha} \, \beta_\alpha \, \left(d- 2 -\frac{\beta_\alpha}{2}\right)^{1- 2/\alpha} .
$$
Finally using \eqref{eq:BE_Sub2} completes the proof in the case $\alpha >2$. \smallskip \\
We now turn to the case $1<\alpha \leq 2$ for small $R_{\max}$. Although the same function $w(r) = r^{-b}$ as above may be used, 
the function $w(r) =  \exp \, (- \varepsilon r^\alpha /\alpha )$ with $\varepsilon >0$ chosen below produces a slightly better lower bound. By $\eqref{eq:beta_plus}$, the term at the boundary $x\in \dO$ satisfies
\begin{align*}
\label{eq:bord-exp1} 
\J \eta (x) + \frac{w ' (r) }{w(r)} \, \frac{\langle x , \eta (x) \rangle}{r}  \, I 
& = \J\eta (x) - \ve r^{\alpha-1} \, \frac{\langle x , \eta (x) \rangle}{r}  \, I \\
\nonumber & \geq \frac{\langle x,\eta (x) \rangle ^+}{r^2} \, \left( \beta - \ve r^\alpha \right) \, I,  
\end{align*}
hence the assumption $(A_2)$ of Theorem \ref{theo:main} is satisfied as soon as $\ve \leq \beta / R_\tmax ^{\alpha}$. 
Moreover, since 
\begin{align*}
\quad \frac{\Delta w(r)}{w(r)} & = \frac{w''(r)}{w(r)} + \frac{d-1}{r} \, \frac{w'(r)}{w(r)} \\
& = \ve^2  r^{2(\alpha-1)} - \ve (d+\alpha-2)r^{\alpha-2} , 
\end{align*}
one has  
\begin{align*}
\rho \left( \nabla^2 V (x) - \L W (x) \, W^{-1} (x)\right) 
& = (\alpha-1) r^{\alpha-2} + \ve (d+\alpha-2)r^{\alpha-2} - \ve r^{2(\alpha-1)} - \ve^2  r^{2(\alpha-1)} \\
& =(\alpha-1 - \ve  r^{\alpha}) r^{\alpha-2} + \ve (d +\alpha- 2 - \ve r^{\alpha}) r^{\alpha-2} 
\\
& \geq (\alpha-1 -b) r^{\alpha-2}+ \frac{b  (d+\alpha- 2-b)}{R_{\tmax}^\alpha} r^{\alpha-2}\\
& \geq  \frac{b  (d+\alpha-2-b)}{R_{\tmax}^2} ,
\end{align*}
the two inequalities being consequences of the choice $\ve = b / R_{\tmax}^{\alpha}$ with $b \leq \beta_\alpha = \min \{ \beta,\alpha-1 \}$. As such, choosing finally $b= \beta_\alpha$ yields the desired bound for sufficiently small $R_{\max}$ in the case $\alpha \in (1,2]$. \\
Let us achieve the proof by providing the relevant bound for large $R_{\max}$. As noticed earlier, the standard Gaussian case $\alpha=2$ is straightforward by using the sharp estimate \eqref{eq:BE_Sub}, which also holds when $1 < \alpha < 2$. However it can be improved in the latter case to reach the sharp regime with respect to the dimension at least when $0 \in \Int (\O )$. 
Note that in this case we have $\langle x,\eta (x) \rangle \geq 0$ for all $x\in \dO$ so that the assumption $(A_2)$ of Theorem \ref{theo:main} is trivially satisfied for any non-decreasing radial function $w$. 
We now take $W (x) = w (r) \, I$ with the non-decreasing radial function $w(r) =  \exp \, (\varepsilon r^\alpha /\alpha )$ with $ \varepsilon >0$ to be chosen conveniently. In contrast to the previous situation, observe that the sign in front of $\ve$ is now a $+$, which makes a great difference for our purpose. For $x\in \Omega$, we have
\begin{align*}
\rho \left( \nabla^2 V (x) - \L W (x) \, W^{-1} (x)\right) & = (\alpha -1 - \varepsilon \, (d+ \alpha-2) ) \, r^{\alpha-2} +\varepsilon (1-\varepsilon ) r^{2(\alpha-1)} \\
& \geq (\alpha -1 - \varepsilon \, (d+ \alpha-2) ) \, r^{\alpha-2} + \frac{\varepsilon}{2} \, r^{2(\alpha-1)},
\end{align*}
which is positive
as soon as $\varepsilon \in  (0,(\alpha-1)/(d+\alpha-2)) \subset (0,1/2]$. Denoting $\varphi$ the latter function of $r \in (0,R_\tmax ]$, one observes that the minimum of $\varphi$ on $\R^+$ is attained at point
$$
r_0 = \left( \frac{(2-\alpha) (\alpha-1 - \varepsilon (d+\alpha-2))}{ \varepsilon (\alpha-1)} \right) ^{1/\alpha} ,
$$
so that for all $r\in (0,R_\tmax]$,
$$
\varphi (r) \geq \frac{\alpha \, (\alpha-1 - \varepsilon (d+\alpha-2))}{2(\alpha-1)} \, \left( \frac{(2-\alpha)(\alpha-1 - \varepsilon (d+\alpha-2))}{\varepsilon (\alpha-1)}\right) ^{1-2/\alpha} .
$$
Choosing the parameter
$$
\varepsilon = \frac{\alpha-1}{2(d+\alpha-2)} \, \in (0,1/2),
$$
yields the inequality
$$
\rho \left( \nabla^2 V (x) - \L W (x) \, W^{-1} (x)\right) \geq \frac{\alpha }{4} \, \left( \frac{2-\alpha}{\alpha-1}\right) ^{1-2/\alpha} \, (d+\alpha-2) ^{1-2/\alpha} ,
$$
which ends the  proof in the case $1<\alpha <2$. 
\end{proof}

Before commenting the results obtained in Proposition \ref{prop:subbotin}, let us mention that similarly to Corollary \ref{corol:main2bis} we are also able to state a generalized Brascamp-Lieb inequality in the Subbotin case. For instance when $\alpha \geq 2$, the above choice $w(r) =  \exp \, (- \varepsilon r^\alpha /\alpha )$ with $\ve = \beta_\alpha / R^{\alpha} _{\max}$ and $\beta_\alpha = \min \{ \beta,1\}$ ensures on the one hand that the assumption $(A_2)$ of Theorem \ref{theo:main} is satisfied, and on the other hand  
\begin{align*}
\rho \left( \nabla^2 V (x) - \L W (x) \, W^{-1} (x)\right) 
& = (1 - \ve  r^{\alpha}) r^{\alpha-2} + \ve (d +\alpha- 2 - \ve r^{\alpha}) r^{\alpha-2} 
\\
& \geq (1-\beta_\alpha) r^{\alpha-2}+ \frac{\beta_\alpha  (d+\alpha- 2-\beta_\alpha )}{R_{\tmax}^\alpha} \, r^{\alpha-2}\\
& \geq \frac{1-\beta_\alpha}{R_\tmax ^\alpha} \, r^{2(\alpha-1)}+ \frac{\beta _\alpha (d+\alpha- 2-\beta_\alpha )}{R_{\tmax}^\alpha} \, r^{\alpha-2},
\end{align*}
so that it leads to the following inequality: for all $g\in \C ^\infty (\Omega)$, 
\[
\Var_\mu(f) \leq \int_\O |\nabla f|^2  \frac{r^2} {\beta_\alpha (d+\alpha -2-\beta_\alpha ) + (1-\beta_\alpha )r^\alpha}  \left( \frac{R_\tmax}{r} \right)^\alpha d\mu.
\]
On the other hand when $0 \in \Int (\O )$, choosing $w(r) =  \exp \, ( \varepsilon r^\alpha /\alpha )$ with $\ve >0$ some conveniently chosen parameter (assumption $(A_2)$ of Theorem \ref{theo:main} is then automatically satisfied) entails that
\begin{align*}
\rho \left( \nabla^2 V (x) - \L W (x) \, W^{-1} (x)\right) 
& = (1 - \varepsilon \, (d+ \alpha-2) ) \, r^{\alpha-2} +\varepsilon (1-\varepsilon ) r^{2(\alpha-1)} \\
& \geq C_{\alpha,d} \, \left( r^{\alpha-2} + r^{2(\alpha-1)} \right) ,
\end{align*}
with $C_{\alpha ,d}$ some explicit constant depending on $\alpha$ and behaving as $1/d$ as $d$ goes to infinity, so that it leads to another generalized Brascamp-Lieb inequality similar to the one we obtained previously on the whole space $\R^d$ (cf. page 1049 in \cite{ABJ}). As such, it is not clear how to compare these two inequalities between them and with \eqref{eq:GBL_r2}, even in the Gaussian case $\alpha=2$. \smallskip 

Now let us come back to the spectral results obtained in Proposition \ref{prop:subbotin}. Actually, the various lower bounds on the spectral gaps appear with a maximum, meaning that some comparison depending on the parameters of interest has to be done. More precisely, it is interesting to note that one has to compare $R_\tmax^\alpha$ and the dimension $d$ to find which estimate is the best, justifying the discussion on the ball $\B (0,R)$ before Proposition \ref{prop:subbotin}. Moreover, to see the relevance of our estimate, let us focus on the ball $\B (0,R)$ for which some results are already available in the literature. According to \cite{bobkov_radial,bjm}, this radial situation can be reduced to a careful study of the one-dimensional radial part and in this case we have the two-sided estimates, cf. \cite{bjm}:
$$
\frac{(d-1) \, \mu (\B (0,R))}{\int_{\B (0,R)} \vert x\vert ^2 \, d\mu (x)} \leq \lambda_1 (\B (0,R ) ,\mu ) \leq \frac{d \, \mu (\B (0,R))}{\int_{\B (0,R)} \vert x\vert ^2 \, d\mu (x)} .
$$
Hence the following asymptotic result holds:
$$
\lambda_1 (\B (0,R) ,\mu) \underset{d \to \infty}{\sim} \frac{d \, \mu (\B (0,R))}{ \int_{\B (0,R)} \vert x\vert ^2 \, d\mu (x)}.
$$
Passing then in polar coordinates and using Laplace's method for the estimation of integrals leads to the exact asymptotics
$$
\lambda_1 (\B (0,R) ,\mu) \underset{d \to \infty}{\sim} \max \left \{ \frac{d}{R^2}, d^{1-2/\alpha} \right \} .
$$
Hence one deduces that the estimates of Proposition \ref{prop:subbotin} applied to the ball $\B (0,R)$ directly recover these asymptotics up to some unessential prefactors depending only on $\alpha$ since we have $\beta = 1$ and $R_\tmax  = R$. \smallskip 

As mentioned just after the proof of Corollary \ref{corol:main2bis}, there exists a situation for which Payne-Weinberger's result is better than ours: when the diameter of the convex body is sufficiently small compared to $R_{\max}$ (enforcing the distance $d(0,\O)$ to be large). Note that since we always have $$
d(0, \O) \leq R_{\max} \leq d(0,\O) + \diam (\O ), 
$$ 
$R_{\max}$ and $d(0,\O)$ are of the same order as soon as the diameter is sufficiently small compared to $R_{\max}$. Therefore Payne-Weinberger's estimate \eqref{eq:Payne_Wein} is better than ours appearing in Proposition \ref{prop:subbotin} in the range 
$$
\frac{1}{\diam (\O ) ^{2}} \, \gg \, \max \left \{ \frac{d}{R_{\max} ^2} , R_{\max} ^{\, \alpha -2} \right \} .
$$
(in the maximum above we ignore the unessential prefactors which do not depend on the quantities of interest, $R_{\max}$, the dimension and the diameter). However using the rotational invariance of the Subbotin distribution will allow us to improve our estimates in this setting by considering functions that depend only on $d-1$ coordinates. This idea already appeared in \cite{cat_guil_mic} for the Gaussian case and revealed to be fruitful to estimate the spectral gap. Assume that $\O$ is far from the origin and that its diameter is sufficiently small. Since the Subbotin measure is radial, one can make a change of coordinates by a rotation and assume that $\Int (\O ) \cap \R e_1 \neq \emptyset$, where recall that $e_1$ is the first vector of the standard canonical basis of $\R^d$. Given some vector $x\in \R^d$, we set $x_{-1} = (0,x_2,\dots,x_d)$ and $r_{-1} = \vert x_{-1} \vert $. Note that one always has $r\geq r_{-1}$. We denote
$$
R_{-1,\tmax} = \underset{ x\in \Omega }{\sup} \, r_{-1} , 
$$
which is always smaller than (and sometimes comparable to) the diameter. The assumption in force now, which is the analogue of \eqref{eq:beta_plus} adapted to the present situation, is the following geometric condition: there exists $\tilde{\beta} \geq 0$ such that 
\begin{equation}
\label{eq:beta-1+}
\J\eta (x) \geq \tilde{\beta}  \, \frac{\langle x_{-1}, \eta (x) \rangle ^+ }{r_{-1}^2} \, I , \quad x\in \dO .
\end{equation}
Although the meaning of this geometric condition is not completely clear, it is satisfied with $\tilde{\beta} = 1$ for the ball $\B(a,R)$. Indeed we have 
$\J\eta (x) = (1/ R) \, I$ for all $x\in \dO$ and since $x= a + R\eta(x)$, we get $x_{-1}= R \, \eta (x) _{-1}$ and 
\[
\frac{\langle x_{-1}, \eta (x) \rangle ^+ }{r_{-1}^2} = \frac{R \vert \eta(x)_{-1} \vert^2}{R^2 \vert \eta(x)_{-1} \vert ^2} = \frac{1}{R}.
\]
Note that if there exists some $x \in \dO$ such that $r_{-1} = 0$, then a necessary condition to get \eqref{eq:beta-1+} at point $x$ is $\langle x_{-1}, \eta (x) \rangle \leq 0$. \smallskip 

Now we can state the desired improvement of Proposition \ref{prop:subbotin} when considering the above situation. As we will see in the proof, our idea is to consider $W = w I$ with $w$ a function depending only on $r_{-1}$ instead of $r$ as in the proof of Proposition \ref{prop:subbotin}. The price to pay for such a modification is not too high since we only lose a bit on the dimension in the constants ($d$ is replaced by $d-1$).
\begin{prop}
\label{prop:subbotin-non-centree}
Let $\O \subset \R^d$ ($d>4$) be a smooth convex body such that $\Int (\O ) \cap \R e_1 \neq \emptyset$. Assume that $\O$ satisfies \eqref{eq:beta-1+} for some $\tilde{\beta} \geq 0$. Let $\mu$ be the Subbotin distribution with parameter $\alpha >1 $ on $\O$. Denote $\tilde{\beta}_\alpha = \min \{ \tilde{\beta} ,1, \alpha-1 \}$. Then the spectral gap satisfies: 
\begin{itemize}
\item If $\alpha >2$, then 
\[
\lambda_1 \left( \O , \mu \right) \geq  \max \left \{  \frac{ \tilde{\beta}_\alpha (d-3-\tilde{\beta}_\alpha )}{R^2_{-1,\max}}, \, C_{\alpha} \tilde{\beta}_\alpha \, \left(d- 3 -\frac{\tilde{\beta}_\alpha }{2}\right)^{1- 2/\alpha}, d(0,\Omega)^{\alpha-2} \right \} ,
\]
with
$
C_{\alpha} = \frac{\alpha}{4} \left( \frac{\alpha-2}{2}  \right)^{(2-\alpha)/\alpha} $.
\item If $\alpha \in (1,2]$, then  
\[
\lambda_1 \left( \O , \mu \right) \geq \max \left \{ \frac{ \tilde{\beta}_\alpha (d +\alpha -3- \tilde{\beta}_\alpha )}{R^2_{-1,\max}} ,  (\alpha-1) \, R_{\max} ^{\, \alpha-2} \right \}.
\]
\end{itemize}
\end{prop}

\begin{proof} 
The proof is somewhat similar to that of Proposition \ref{prop:subbotin} except we consider now $W = w I$ with $w$ a function depending only on $r_{-1}$. In the case $\alpha >2$ we set $w(r_{-1}) = r_{-1} ^{-\tilde{\beta}_\alpha}$ so that on the one hand the hypothesis \eqref{eq:beta-1+} entails that the assumption $(A_2)$ of Theorem \ref{theo:main} is satisfied and on the other hand, 
the smallest eigenvalue of the matrix $\nabla^2 V (x) - \L W (x) \, W^{-1} (x)$ for $x\in \O$ is given by
\begin{align*}
\label{eq:subb:1-r}
\rho \left( \nabla^2 V (x) - \L W (x) \, W^{-1} (x)\right) & =  (1- \tilde{\beta}_\alpha )   \, r^{\alpha-2} + \frac{\tilde{\beta}_\alpha (d-3-\tilde{\beta}_\alpha )} {r_{-1} ^2} \\
& \geq (1- \tilde{\beta}_\alpha ) \, r_{-1} ^{\alpha-2} + \frac{\tilde{\beta}_\alpha (d-3-\tilde{\beta}_\alpha )} {r_{-1} ^2},
\end{align*} 
because $r\geq r_{-1}$ and $\alpha >2$. Since the optimization procedure is then exactly the same as in Proposition \ref{prop:subbotin}, the case $\alpha >2$ is achieved. \smallskip 

\noindent As in Proposition \ref{prop:subbotin}, to improve a bit the constant with respect to the dimension in the case $1<\alpha\leq 2$, we rather consider the function $w(r_{-1})=\exp(-  \ve r_{-1}^\alpha /\alpha)$ with $\ve = \tilde{\beta}_\alpha / R_{-1,\max} ^{\alpha}$. Hence the assumption $(A_2)$ of Theorem \ref{theo:main} is satisfied and for all $x\in \O$ we have 
\begin{align*} 
\rho \left( \nabla^2 V (x) - \L W (x) \, W^{-1} (x)\right) & = (\alpha -1) r^{\alpha-2}  + \ve (d+\alpha-3) r_{-1}^{\alpha-2} - \ve r^{\alpha-2} \, r_{-1}^\alpha - \ve^2  r_{-1}^{2(\alpha-1)} \\
& = (\alpha-1 - \ve  r_{-1} ^{\alpha}) r^{\alpha-2} + \ve (d +\alpha- 3 - \ve r_{-1} ^{\alpha}) r_{-1} ^{\alpha-2} 
\\
& \geq \frac{\tilde{\beta}_\alpha (d+\alpha-3-\tilde{\beta}_\alpha )}{R_{-1,\tmax} ^2}.
\end{align*} 
The proof is now complete.
\end{proof}


As mentioned previously, the assumption \eqref{eq:beta-1+} is the direct analogue of \eqref{eq:beta_plus} adapted to our situation, but it is not completely satisfactory since its geometrical meaning seems difficult to understand. Hence a stronger but more readable assumption relies on the uniform convexity of the convex body.

 
\begin{prop}\label{prop:Subbotin-petites-boules-non-centrees}
Let $\O \subset \R^d$ ($d \geq 2$) be a smooth uniformly convex body such that $\Int (\O ) \cap \R e_1 \neq \emptyset$. Denote $\rho = \inf _{\dO} \rho (\J \eta) >0$. Let $\mu$ be the Subbotin distribution with parameter $\alpha >1 $ on $\O$. Then we have the spectral gap estimate 
\[
\lambda_1 (\Omega , \mu ) \geq \min \{ c_\alpha , \rho \, R_{-1,\max} \} \, \frac{2(d-1)  }{ 3 R_{-1,\max}^2 },
\]
where $c_\alpha = \min \{ 1,  \alpha-1 \}$.
\end{prop}
 
\begin{proof}
As in the proof of Proposition \ref{prop:subbotin-non-centree}, we consider $W = w I$ with $w$ depending on $r_{-1}$. Let us choose $w$ of the form $w(r_1) = C - r_{-1}^2$ where $C > R_{-1,\tmax}^2 $ is some constant to be chosen later. Given $x\in \dO$, we have
\begin{align*}
\J \eta (x) - W (x)  \, \langle \nabla W ^{-1} (x) , \eta (x) \rangle 
& = \J\eta (x)  -\frac{2 r_{-1} }{C-r_{-1}^2} \, \frac{\langle x_{-1}, \eta (x) \rangle}{r_{-1}}  \, I .
\end{align*}
Since we always have $\langle x_{-1} , \eta (x) \rangle \leq r_{-1}$, the assumption $(A_2)$ of Theorem \ref{theo:main} is satisfied as soon as 
\[
\rho \geq \sup_{x\in \dO } \frac{2 r_{-1} }{C-r_{-1}^2} = \frac{2 R_{-1,\tmax} }{C-R_{-1,\tmax}^2},
\]
that is,  
\[
C\geq R_{-1,\tmax}^2 \, \left( 1 + \frac{2 }{\rho \, R_{-1,\tmax}} \right).
\]
Now we have for all $x\in \O$, 
\begin{align*}
\rho \left( \nabla^2 V (x) - \L W (x) \, W^{-1} (x)\right) 
& = c_\alpha r^{\alpha-2} - r^{\alpha-2} \, \frac{2 r_{-1}^2 }{C-r_{-1}^2}  + \frac{2(d-1)}{C-r_{-1}^2} \\
& = \frac{2(d-1)}{C-r_{-1}^2}  +  \frac{ c_\alpha \, C - (c_\alpha +2) r_{-1}^2} {C-r_{-1}^2} \, r^{\alpha-2}\\
&\geq  \frac{2(d-1)}{C-r_{-1}^2} ,
\end{align*}
provided 
\[
C\geq R_{-1,\tmax}^2 \, \left( 1 + \frac{2}{c_\alpha} \right).
\]
Therefore choosing $C$ as the best constant satisfying both constraints, \textit{i.e.}, 
\begin{align*}
C & = R_{-1,\tmax}^2 \, \left( 1 + \frac{2}{ \min \{ c_\alpha , \rho \, R_{-1,\tmax} \} } \right) ,
\end{align*}
entails  
\begin{align*}
\lambda_1(\Omega , \mu) & \geq \inf_{x\in \O} \frac{2(d-1) }{C-r_{-1} ^2 } = \frac{2(d-1) }{C} ,
\end{align*}
which in turn implies the desired spectral gap estimate since the minimum above involving $c_\alpha$ is smaller than 1. 
\end{proof}

\subsection{The case of generalized Orlicz balls}
\label{sect:orlicz}
To finish this work, let us investigate a somewhat different situation, that is, when the second fundamental form $\J \eta $ is diagonal. This is the case when the function $F$ describing the domain $\O$ has an additive form, that is for instance,
$$
F (x) = \sum_{i=1} ^d U_i (x_i) - 1, \quad x\in \R^d ,
$$
where the potentials $U_i : \R \to \R ^+$ are smooth one-dimensional functions, since we have at the boundary $x\in \dO$,
$$
\eta (x) = \frac{1}{\sqrt{\sum_{i=1} ^d U_i ' (x_i) ^2}} \, \left( U_1 ' (x_1), \ldots, U_d ' (x_d)\right) ^T ,
$$
and
$$
\J \eta (x) = \frac{1}{\sqrt{\sum_{i=1} ^d U_i ' (x_i) ^2}} \, \diag \, U_i '' (x_i), \quad x\in \R^d.
$$
Above the diagonal matrix $\diag \, U_i '' (x_i) $ has the $U_i '' (x_i)$ on the diagonal. In particular when the $U_i$ are convex functions, the domain $\O$ is convex and called a generalized Orlicz ball, cf. \cite{koles_milman_Orlicz} (note that similarly to \cite{koles_milman_Orlicz} we do not assume any symmetry assumption on the $U_i$). Although the forthcoming result might be adapted to general probability measures, in particular product measures on $\O$, let us provide a simplified version in the context of the uniform probability measure on the convex body $\O$. This corollary,  derived from Theorem \ref{theo:main} and which exhibits a dimension free spectral gap estimate, corresponds to the third and last main result of the paper.
\begin{corol}
\label{corol:main3}
Let $\Omega$ be of the form
$$
\Omega = \left \{ x\in \R ^d : \sum_{i=1} ^d U_i (x_i) \leq  1 \right \} ,
$$
where the smooth functions $U_i : \R \to \R^+ $ are convex. We assume moreover the following properties: there exists some $R>0$ such that

$\circ$ $\Omega \subset [-R,R]^d$;

$\circ$ there exists some $q>0$ such that for all $i \in \{ 1,\ldots, d \}$,
$$
q \, \vert U_i ' (x_i) \vert \leq U_i '' (x_i), \quad x_i \in [-R,R].
$$

\noindent Then the spectral gap satisfies
$$
\lambda_1 ( \O ) \geq \frac{1}{R^2} \, \arctan \left( \frac{2Rq}{\pi} \right) ^2.
$$
\end{corol}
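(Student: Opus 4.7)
The plan is to apply Theorem \ref{theo:main} with $V \equiv 0$ (so that $L = \Delta$ and the underlying measure is uniform) and the separable diagonal matrix weight $W(x) = \diag\bigl(\cos(a x_1), \ldots, \cos(a x_d)\bigr)$, where $a > 0$ is a parameter to be tuned later and chosen small enough that $aR < \pi/2$; since $\Omega \subset [-R,R]^d$ this keeps $W$ smooth, positive and invertible on $\Omega$.

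Verifying assumption $(A_1)$ is immediate from the separable ansatz: because each diagonal entry $w_i(x_i) = \cos(a x_i)$ depends only on the $i$-th coordinate, $\L W$ is diagonal with $L w_i = w_i'' = -a^2 w_i$, whence
$$
\nabla^2 V - \L W \, W^{-1} = -\diag\!\left( w_i''/w_i \right) = a^2 \, I,
$$
giving $(A_1)$ with constant $\kappa = a^2$. For assumption $(A_2)$, the paper's explicit formula for $(\Jac \, \eta)_{\vert \eta^\perp}$ together with the computation $(W \langle \nabla W^{-1}, \eta\rangle)_{ii} = -w_i'(x_i)\,\eta_i/w_i(x_i) = a\tan(a x_i)\,\eta_i$, combined with $\eta_i = U_i'(x_i)/\sqrt{\sum_j U_j'(x_j)^2}$ at a boundary point, shows that the restricted boundary matrix acts on $u \in \eta^\perp$ as
$$
\bigl\langle u,\, (\Jac \, \eta - W\langle \nabla W^{-1}, \eta\rangle)\, u\bigr\rangle = \frac{1}{\sqrt{\sum_j U_j'(x_j)^2}} \sum_{i=1}^d u_i^2 \, \bigl( U_i''(x_i) - a\tan(a x_i)\, U_i'(x_i) \bigr).
$$
A sufficient condition for non-negativity is that each summand is non-negative; since $|x_i| \leq R$ and $aR < \pi/2$ imply $|a\tan(a x_i)| \leq a\tan(aR)$, the hypothesis $q\, |U_i'| \leq U_i''$ reduces $(A_2)$ to the single scalar constraint $a\tan(aR) \leq q$.

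It remains to pick $a$ maximizing $a^2$ subject to this constraint. Using the elementary bound $\theta \tan\theta \leq (\pi/2)\tan\theta$ valid on $[0,\pi/2)$, I would set $aR = \arctan(2Rq/\pi)$, which gives $\tan(aR) = 2Rq/\pi$ and hence $aR \, \tan(aR) \leq (\pi/2)(2Rq/\pi) = Rq$, i.e.\ $a\tan(aR) \leq q$ as required. Theorem \ref{theo:main} then delivers
$$
\lambda_1(\Omega) \geq a^2 = \frac{1}{R^2} \arctan\!\left(\frac{2Rq}{\pi}\right)^{\!2}.
$$
The main bookkeeping obstacle is the boundary computation for $(A_2)$, but it is tamed by the dual separable structure: the product form of the defining function $F(x) = \sum_i U_i(x_i) - 1$ and the diagonal product weight $W$ decouple the estimates coordinate-by-coordinate, which is precisely the mechanism producing a dimension-free bound.
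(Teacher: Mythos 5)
Your proof is correct and follows essentially the same route as the paper: the same diagonal separable cosine weight $w_i(x_i)=\cos(a x_i)$ (the paper calls the parameter $\beta$), the same identification $\nabla^2 V-\L W\,W^{-1}=a^2 I$ for $(A_1)$, the same diagonalization of the boundary matrix for $(A_2)$, and the same choice $aR=\arctan(2Rq/\pi)$ to meet the constraint $a\tan(aR)\le q$. The only non-cosmetic addition in your writeup is the explicit handling of signs in showing that $a\tan(aR)\le q$ together with $q\,|U_i'|\le U_i''$ implies $U_i''\ge a\tan(a x_i)\,U_i'$ pointwise, which the paper leaves implicit.
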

\begin{proof}
In contrast to all the previous cases met in this paper, we choose for any $x\in \O$ the diagonal matrix weight $W (x)$ in Theorem \ref{theo:main} of the form $\diag \, w_i (x_i)$, where the $w_i$ are some smooth positive functions on $[-R,R]$. In other words $W$ is still a smooth invertible diagonal matrix mapping, but not necessarily a multiple of the identity. Then we have
$$
\nabla^2 V (x) - \L W(x) \, W^{-1} (x) = \diag \, \frac{- w_i '' (x_i)}{w_i (x_i)} , \quad x\in \O ,
$$
and at the boundary $x\in \dO$,
$$
\J \eta (x) - W(x)  \, \langle \nabla W^{-1} (x), \eta (x)\rangle = \frac{1}{\sqrt{\sum_{i=1} ^d U_i '(x_i)^2}} \, \diag \, \left( U_i '' + \frac{w_i '}{w_i} \, U_i ' \right) (x_i).
$$
For any $i \in \{ 1,\ldots, d\}$ we set $w_i (x_i) = \cos (\varepsilon x_i)$ for some relevant $\varepsilon >0$ to be determined thereafter (depending on the parameters $q$ and $R$). Hence the assumption $(A_2)$ in Theorem \ref{theo:main} is satisfied as soon as
$$
U_i ''(x_i) \geq \varepsilon \, \tan (\varepsilon x_i) \, U_i '(x_i) , \quad x_i \in [-R,R].
$$
Then the choice $\varepsilon = \arctan (2Rq /\pi)/R \subset (0,\pi /2R)$ guarantees this inequality (and the fact that the $w_i$ are positive). Finally, the assumption $(A_1)$ in Theorem \ref{theo:main} holds and we obtain the spectral gap estimate
$$
\lambda_1 (\O) \geq \inf_{x\in \O} \min_{i=1,\ldots,d} \frac{- w_i '' (x_i)}{w_i (x_i)} = \varepsilon ^2 ,
$$
which is the desired result.
\end{proof}

It is worth noticing that the dimension free estimate of Corollary \ref{corol:main3} seems useless when the spectral gap is expected to depend on the dimension. For instance in the case of the $\ell ^p$ unit ball with $p \geq 1$, denoted $\B_p$, the potentials $U_i$ are of the form $U_i (x_i) = \vert x_i \vert ^p$, $x_i\in [-1,1]$, and the spectral gap $\lambda_1 (\B_p )$ is of order $d^{2/p}$, cf. \cite{sodin} for the case $p \in [1,2]$ and \cite{latala} for $p \geq 2$. Thus it satisfies the famous KLS conjecture. However our estimate becomes relevant as $p$ tends to infinity since Corollary \ref{corol:main3} entails, when applied to the $\ell ^p$ unit ball with $q=p-1$ and $R = 1$, the lower bound $\arctan \left( 2(p-1)/\pi \right) ^2$ which converges to $\pi ^2 /4$. This quantity is the expected value of the spectral gap obtained by tensorization, the $\ell^\infty$ unit ball being nothing but the hypercube $[-1,1]^d$. \smallskip

To go further into the analysis, it is known that there is no monotonicity properties of the spectral gap with respect to the inclusion of domains, even in the convex case. Indeed one could believe \textit{a priori} that, similarly to the one-dimensional case, the spectral gap decreases when the convex domain increases since it is intimately related to the speed of convergence to equilibrium of the underlying Brownian motion. Nevertheless, considering some thin rectangle $\O \subset [-R,R]^d$ localized around the diagonal of the hypercube shows that this intuition is false: since its largest side is of order $R \sqrt{d}$, the spectral gap $\lambda_1 (\O)$ is of order $1/d R^2$ whereas $\lambda_1 ([-R,R]^d ) = \pi ^2 /4R^2$. Note however that Klartag \cite{klartag} proved a kind of monotonicity property in the unconditional situation: if $\O \subset [-R,R] ^d$ is unconditional then
$$
\lambda_1 \left( \O \right) \geq \lambda_1 ( [-R,R]^d ).
$$
Hence Corollary \ref{corol:main3} can be seen as a generalization of Klartag's result beyond the unconditional setting as soon as the parameter $q$ does not depend on the dimension. For instance it should be applied to some domain $\O$ involving non symmetric one-dimensional potentials on a centered bounded interval of the type
$$
U_i (x_i) = 1_{\{ x_i \geq 0\}} \, \vert x_i \vert ^{p_i} + 1_{\{ x_i < 0\}} \, \vert x_i \vert ^{q_i},
$$
for $p_i ,q_i \geq 1$. See also the work of Kolesnikov and Milman \cite{koles_milman_Orlicz} in which the authors show that the generalized Orlicz balls $\O_E = \left \{ x\in \R ^d : \sum_{i=1} ^d U_i (x_i) \leq  E \right \} $ (without the boundedness restriction $\O_E \subset [-R,R]^d$ for some $R>0$) satisfy the KLS conjecture for certain levels $E \in \R$ under an assumption on the rate of growth at infinity of the $U_i$. \smallskip

\section{Appendix}

Considering the case of a product measure on a cube, we are able to reach optimality in Theorem \ref{theo:main}. 
Indeed the idea is to take the matrix mapping $W$ as the Jacobian matrix of the diffeomorphism whose coordinates are the eigenfunctions related to the spectral gap of the one-dimensional marginal distributions. As such, the weight $W$ satifies the assumptions $(A_1)$ and $(A_2)$ and is thus diagonal (for assumption $(A_2)$, an approximation procedure somewhat similar to that emphasized in Corollary \ref{corol:main3} is required). In this appendix, we focus our attention on the spectral gap $\lambda_1 (\B (0,R))$ of the centered Euclidean ball $\B (0,R)$ of radius $R>0$ endowed with the uniform distribution and wonder if a possible optimality in Theorem \ref{theo:main} can be reached in this non-product context. We will see that such an analysis leads to an interesting phenomenon and opens the door to a natural open question. Before going further into the details, let us recall how to identify the exact expression of the spectral gap $\lambda_1 (\B (0,R))$, cf. for instance Weinberger \cite{weinberger}. Actually, the associated eigenspace is known to be of dimension $d$ and the corresponding eigenfunctions are given (in a vector field notation) by
$$
F(x) = \frac{g(r)}{r} \, x , \quad x\in \B (0,R),
$$
where $g$ is solution to the equation
$$
g''(r) + (d-1) \, \frac{g'(r)}{r} + \left( \lambda_1 (\B (0,R)) - \frac{d-1}{r^2}\right) \, g = 0,
$$
which vanishes at $0$. This is a generalized Bessel equation and classical computations provide the generic solution given by
\begin{equation}
\label{eq:g-bessel}
g(r) = \left( \sqrt{\lambda_1 (\B (0,R))} \, r \right) ^{1 -\frac{d}{2}} \, J_{\frac{d}{2}} \left(\sqrt{\lambda_1 (\B (0,R))} \, r \right),
\end{equation}
where $J_{d/2}$ stands for the Bessel function of the first kind $J_\varsigma$, \textit{i.e.},
$$
J_\varsigma (r) := \sum_{k=0} ^\infty \frac{(-1)^k \left(\frac{r}{2} \right)^{\varsigma +2k}}{k! \Gamma (\varsigma + k +1)},
$$
with $\varsigma = d/2$. Moreover a standard analysis shows that the ratio $g'/g$ is non-negative up to the first zero of $g'$. We have
\begin{equation}
\label{eq:jac}
\Jac \, F (x) = \frac{g(r)}{r} \, I + \left( g'(r) - \frac{g(r)}{r} \right) \, \frac{x x^T}{r^2}  ,
\end{equation}
and since we have $\eta (x) = x/r$, we get at the boundary $\S (0,R)$ (the sphere centered at the origin and of radius $R$),
$$
\Jac \, F (x) \, \eta (x) = \frac{g'(R)}{R} \, x ,
$$
so that each coordinate of the vector field $F$ satisfies the Neumann boundary conditions if and only if $g'(R) = 0$. In other words, if we note the function $\tilde{J}_\varsigma : u \mapsto u^{1-d/2} J_{d/2}(u) $, it means that the derivative at point $u = \sqrt{\lambda_1 (\B (0,R))} \, R$ of $\tilde{J}_\varsigma $ applied with $\varsigma = d/2$ vanishes. Thus if $p_{\varsigma}$ denotes the first positive zero of $\tilde{J}_\varsigma$, then the spectral gap $\lambda_1 (\B (0,R))$, corresponding to the smallest positive eigenvalue, is given by
\begin{equation}
\label{eq:trou_boule}
\lambda_1 ( \B (0,R)) = \frac{p_{\frac{d}{2}} ^2}{R^2}.
\end{equation}

\noindent Actually, an interesting question is the following: which radial function leads to the largest spectral gap estimate ? After some computations somewhat similar to the previous ones, such a radial function is given by
$$
w(r) = \left( \sqrt \lambda r \right) ^{1 -\frac{d}{2}} \, J_{\frac{d}{2} - 1} \left(\sqrt \lambda r \right),
$$
for some convenient $\lambda >0$ determined when saturating the boundary condition $w(R) + R \, w'(R) \geq 0$, that is, $\lambda = p_{d/2 \, -1} ^2 /R^2$, so that we obtain
$$
\lambda_1 ( \B (0,R)) \geq \frac{p_{\frac{d}{2} -1} ^2}{R^2} .
$$
However this lower bound is not really explicit in terms of the dimension, as the optimal one \eqref{eq:trou_boule}. \smallskip

Let us return to our original questioning about a potential optimality in Theorem \ref{theo:main} in the non-product context of the uniform measure on $\B (0,R)$. Similarly to the product measure case, we intend to choose the weight $W$ as the Jacobian matrix of the eigenfunctions associated to the spectral gap $\lambda_1 (\B (0,R))$, cf. the formula \eqref{eq:jac}. Although $W$ is not diagonal (thus Theorem \ref{theo:main} cannot be used directly), let us observe however how the assumptions $(A_1)$ and $(A_2)$ are satisfied. Since $W$ is not invertible on the boundary $\S (0,R)$, we still work on the ball $\B (0,R)$ but we consider on the extended ball $\B(0,R+\ve)$ for some $\ve>0$ the smooth matrix mapping $W_\ve = \Jac \, F_\ve$. Here $F_\varepsilon$ denotes the vector field whose coordinates are the eigenfunctions associated to the spectral gap $\lambda_1 (\B (0,R+ \ve ))$, with the corresponding function $g_\ve$ defined analogously to \eqref{eq:g-bessel}. From the computations below, we will see that $W_\ve$ is invertible on $\B (0,R)$. We simply denote $W$ and $g$ the respective quantities for $\ve = 0$. Moreover we (still) denote $\Delta$ the diagonal matrix operator with the Laplacian acting on functions on the diagonal. \\
On the one hand the identity $\Delta F_\ve = - \lambda_1 (\B (0,R+\ve))\, F_\ve$ holds on the smaller ball $\B (0,R)$ and leads to
$$
\nabla^2 V - \L W \, W^{-1} = - \Delta \Jac \, F_\ve \, (\Jac \, F_\ve )^{-1} = \lambda _1(B(0,R+\ve) ) \, I,
$$
since the Laplacian commutes with the Jacobian. Hence assumption $(A_1)$ is satisfied. On the other hand we need some additional computations to verify assumption $(A_2)$. Recall that by \eqref{eq:jac} we have
$$
W_\ve (x) = \frac{g_\ve (r)}{r} \, I  + \left( g_\ve '(r) - \frac{g_\ve (r)}{r} \right) \, \frac{x x^T}{r^2} , \quad x\in \B(0,R).
$$
Above the matrix $x x^T /r^2$ is that of the orthogonal projection onto $\R x$. In particular the matrix $W_\ve (x) $ is diagonalizable and $x$ is an eigenvector associated to the eigenvalue $g_\ve '(r)$ whereas any vector of the orthogonal complement $(\R x )^\perp$ is an eigenvector  associated to the eigenvalue $g_\ve (r)/r$. Since $g_\ve$ and $g' _\ve$ do not vanish on $(0,R+ \ve )$, thus on $(0,R]$, $W_\ve$ is invertible on $\B (0,R)$ and
$$
W_\ve ^{-1} (x) = \frac{r}{g_\ve (r)} \, I + \left(\frac{1}{g_\ve '(r)} - \frac{r}{g_\ve (r)}\right) \, \frac{x x^T}{r^2}, \quad x\in \B (0,R).
$$
Note that because $\eta (x) = x/r$ we have for all $i,j = 1,\ldots, d$,
$$
\left \langle \nabla \left( \frac{x_i x_j}{r^2} \right), \eta (x) \right \rangle = \sum_{k=1} ^d \left( \frac{x_j}{r^2} \, \delta _{i,k} + \frac{x_i}{r^2} \, \delta _{j,k} - \frac{2x_i x_j x_k }{r^4} \right) \, \frac{x_k}{r}  = 0,
$$
where $\delta$ is the usual Kronecker delta symbol. Since for any smooth radial function $h$ we have $\nabla h (r) = h'(r) x/r$, we obtain
$$
\left \langle \nabla W_\ve ^{-1} (x) , \eta (x) \right \rangle = \frac{\partial }{\partial r} \left(\frac{r}{g_\ve (r)}\right) \, I + \frac{\partial }{\partial r} \left(\frac{1}{g_\ve '(r)} - \frac{r}{g_\ve (r)}\right)\,  \frac{x x^T}{r^2}.
$$
Finally at the boundary $x\in \S (0,R)$, we have on the hyperplane $\eta (x) ^\perp$,
\begin{eqnarray*}
\Jac \, \eta (x) - W_\ve (x)  \, \left \langle \nabla W_\ve ^{-1} (x), \eta (x) \right \rangle & = & \left( \frac{1}{R} - \frac{g_\ve (R)}{R} \, \frac{\partial }{\partial r} \left(\frac{r}{g_\ve (r)}\right) {\Big | _ {r = R}} \right) \, I \\
& = & \frac{g' _\ve (R)}{g_\ve (R)} \, I ,
\end{eqnarray*}
meaning that assumption $(A_2)$ is satisfied for each fixed $\ve >0$ since $g' _\ve /g_\ve \geq 0$ on $(0,R+\ve)$ and thus on $(0,R]$. Finally assumption $(A_1)$ is satisfied with the (optimal) constant matrix $\lambda_1 (\B (0, R)) \, I$ by passing to the limit $\ve \to 0$ since $\lambda_1 (\B (0, R+\ve)) \to \lambda_1 (\B (0, R))$. Moreover $g' _\ve (R) /g_\ve (R) \to g' (R) /g (R) = 0$, \textit{i.e.}, the boundary term in $(A_2)$ vanishes. The validity of these limits can be obtained either by a classical continuity argument or by using directly the exact expression \eqref{eq:trou_boule} of the spectral gap of any Euclidean ball and plugging then into the formula \eqref{eq:g-bessel} defining $g_\ve$. \smallskip

As announced earlier, the previous discussion does not allow us to use directly Theorem \ref{theo:main} because the matrix mapping $W$ is not diagonal. However we can apply Lemma \ref{lemme:decompo} with the above weight $W_\ve$ and let $\ve \to 0$ to get the following identity: if $\mu$ is the uniform probability measure on $\B (0, R)$, then for all $f\in \C _N ^\infty (\B (0, R))$,
\begin{eqnarray*}
\nonumber \int_{\B (0, R)} (-Lf)^2 \, d\mu & = & \int_{\B (0, R)} \left[ \nabla (W^{-1} \nabla f) \right] ^T \, W^T W \, \nabla (W^{-1} \nabla f) \, d\mu \\
\nonumber & & + \int_{\B (0, R)} \left \langle W^{-1} \nabla f , ( \nabla W^T \, W - W^T \, \nabla W ) \, \nabla (W^{-1}\nabla f) \right \rangle \, d\mu \\
\nonumber & & + \lambda_1 (\B (0,R)) \, \int_{\B (0, R)} \vert \nabla f \vert ^2 \, d\mu .
\end{eqnarray*}
Although the first term is always non-negative, some computations show that for all $i,j=1,\ldots, d$,
$$
\left( \nabla W^T \, W - W^T \, \nabla W \right) _{i,j}= \frac{1}{r^2} \, \left( g'(r) - \frac{g(r)}{r} \right) ^2 \, \left( x_j e_i - x_i e_j \right) ,
$$
where $(e_k)_{k=1,\ldots,d}$ is the usual canonical basis of $\R^d$. Therefore the matrix of vectors $\nabla W^T \, W - W^T \, \nabla W $ is not zero and thus it is not clear to us that the second integral above vanishes, as it is trivially the case when $W$ is diagonal (or when $f$ is one of the eigenfunctions associated to the spectral gap $\lambda_1 (\B(0,R))$ since in this case $W^{-1} \nabla f $ is constant). Nevertheless according to the (integrated version of) Bakry-Emery criterion, cf. \cite{bakry_emery}, we know \textit{a priori} that the sum of these two integrals is non-negative. Hence a challenging question would be to prove directly that this sum is non-negative for all $f\in \C _N ^\infty (\B (0, R))$.

\end{document}